\documentclass[12pt]{amsart}

\usepackage{mathtools}
\def\mbinom#1#2{\ensuremath{\left(\kern-.3em\left(\genfrac{}{}{0pt}{}{#1}{#2}\right)\kern-.3em\right)}}

\usepackage[color,matrix,arrow]{xy}
\usepackage{graphicx}
\usepackage{color}
\usepackage{etoolbox}
\usepackage[margin=1in]{geometry}
\usepackage{amsmath,amsthm,amssymb,tikz,tikz-cd}
\usepackage{stmaryrd}
\usepackage{hyperref}
\hypersetup{
    colorlinks = true,
    citecolor = orange,%
}
\usepackage[noabbrev]{cleveref}
\usepackage{mathdots} 

\theoremstyle{plain}
\newtheorem{theorem}{Theorem}[section]

\newtheorem*{thmA}{Theorem A}
\newtheorem*{thmB}{Theorem B}
\newtheorem*{thmC}{Theorem C}
\newtheorem{lemma}[theorem]{Lemma}

\theoremstyle{definition}
\newtheorem{remark}[theorem]{Remark}
\newtheorem{example}[theorem]{Example}
\newtheorem{definition}[theorem]{Definition}
\usepackage{tikz}
\usetikzlibrary{backgrounds}
\usetikzlibrary{arrows}
\usetikzlibrary{shapes,shapes.geometric,shapes.misc}
\tikzstyle{tikzfig}=[baseline=-0.25em,scale=0.5]
\pgfkeys{/tikz/tikzit fill/.initial=0}
\pgfkeys{/tikz/tikzit draw/.initial=0}
\pgfkeys{/tikz/tikzit shape/.initial=0}
\pgfkeys{/tikz/tikzit category/.initial=0}
\pgfdeclarelayer{edgelayer}
\pgfdeclarelayer{nodelayer}
\pgfsetlayers{background,edgelayer,nodelayer,main}

\tikzstyle{none}=[inner sep=0mm]

\tikzstyle{label}=[inner sep=0.15mm, font={\footnotesize}]
\tikzstyle{vert}=[fill=black, draw=white, shape=circle, line width=0.25mm, tikzit shape=circle, inner sep=0.35mm]
\tikzstyle{label-small}=[inner sep=0.1mm, font={\scriptsize}]
\tikzstyle{label-white}=[fill=white, draw=none, shape=circle, inner sep=0.07mm, font={\scriptsize}]
\tikzstyle{vert-white}=[fill=white, draw=black, shape=circle, inner sep=0.35 mm]
\tikzstyle{vert-red}=[fill=red, draw=white, shape=circle, inner sep=0.4 mm, font={\scriptsize}, text=black]

\tikzstyle{Vert}=[fill=black, draw=black, shape=circle, minimum size=0.4em, inner sep=0.4pt, scale=0.6]
\tikzstyle{label-red}=[fill=white, draw=black, shape=circle, scale=0.4, color=red]
\tikzstyle{label small}=[fill=none, draw=none, shape=circle, scale=0.7, inner sep = 0.1]
\tikzstyle{bullet}=[fill=black, draw=black, shape=circle, scale=0.5]
\tikzstyle{G-vert}=[fill=white, draw=black, shape=circle, inner sep=0.7pt, minimum size=0.4em, scale=0.6]
\tikzstyle{label}=[inner sep=0.15mm, font={\footnotesize}]
\tikzstyle{vert}=[fill=black, draw=white, shape=circle, line width=0.15mm, tikzit shape=circle, inner sep=1 mm, scale = 1.5]
\tikzstyle{nodes}=[fill=white, draw=none, shape=circle, inner sep=0.7pt, font={\scriptsize}, minimum size=11pt]
\tikzstyle{label-s}=[inner sep=0.1mm, font={\scriptsize}]

\tikzstyle{label-w}=[fill=white, draw=white, shape=circle, inner sep=0.07mm, font={\scriptsize}]
\tikzstyle{charge}=[fill=white, draw={rgb,255: red,16; green,77; blue,209}, shape=circle, inner sep=0.1 em, scale=0.8]

\tikzstyle{label}=[inner sep=0.15mm, font={\footnotesize}]
\tikzstyle{vert}=[fill=black, draw=white, shape=circle, line width=0.3mm, tikzit shape=circle, inner sep=0.35mm,minimum size=6pt]
\tikzstyle{label-s}=[inner sep=0.1mm, font={\scriptsize}]
\tikzstyle{label-w}=[fill=white, draw=none, shape=circle, inner sep=0.07mm, font={\scriptsize}]
\tikzstyle{charge}=[fill=white, draw={rgb,255: red,16; green,77; blue,209}, shape=circle, inner sep=0.2 em, scale=0.6]
\tikzstyle{particle}=[fill=black, draw=black, shape=circle, inner sep=0.76 mm]
\tikzstyle{vert-white}=[fill = white, draw = black, shape = circle, inner sep = 1.89 pt]

\tikzstyle{blue thick}=[-, tikzit draw=blue, draw=blue, line width=1.7pt]
\tikzstyle{thick}=[-, ultra thick]
\tikzstyle{dot}=[-, dotted]
\tikzstyle{fade}=[-, opacity=0.5]
\tikzstyle{yellow}=[-, draw={rgb,255: red,208; green,208; blue,42}, line width=1.5]
\tikzstyle{arrow}=[->, line width=2em]
\tikzstyle{faded}=[-, opacity=0.5]
\tikzstyle{matching}=[-, draw=blue, line width=2.5pt, opacity=0.5]
\tikzstyle{orange}=[-, draw={rgb,255: red,255; green,128; blue,0}]
\tikzstyle{red arrow}=[draw=red, ->]
\tikzstyle{double edge}=[-, double, double distance=1 pt, draw=blue, line width=1.3, opacity=1]
\tikzstyle{single edge}=[-, opacity=1, draw=blue, line width=1.3]
\tikzstyle{dot}=[-, dotted]
\tikzstyle{red arrow thick}=[->, line width=0.8pt, draw=red]
\tikzstyle{dimer}=[-, line width=1.75 pt, draw=red]

\tikzstyle{dashed}=[-, densely dotted]
\tikzstyle{virtual}=[-, double]
\tikzstyle{RED}=[-, draw=red]
\tikzstyle{CYAN}=[-, draw=cyan]
\tikzstyle{BLUE}=[-, draw=blue]
\tikzstyle{LGREEN}=[-, draw=green]
\tikzstyle{DGREEN}=[-, draw={rgb,255: red,0; green,128; blue,128}]
\tikzstyle{PURPLE}=[-, draw={rgb,255: red,128; green,0; blue,128}]
\tikzstyle{ORANGE}=[-, draw={rgb,255: red,255; green,128; blue,0}]
\tikzstyle{MAGENTA}=[-, draw=magenta]
\tikzstyle{BLUE(dashed)}=[-, draw=blue, densely dotted]
\tikzstyle{GREEN}=[-, draw={rgb,255: red,0; green,208; blue,6}]
\tikzstyle{MAGENTA(dashed)}=[-, draw=magenta, densely dotted]
\tikzstyle{ORANGE(dashed)}=[-, draw={rgb,255: red,255; green,128; blue,0}, densely dotted]
\tikzstyle{RED(dashed)}=[-, draw={rgb,255: red,191; green,0; blue,64}, densely dotted]
\tikzstyle{PURPLE(dashed)}=[-, draw={rgb,255: red,128; green,0; blue,128}, densely dotted]
\tikzstyle{GREEN(dashed)}=[-, draw={rgb,255: red,0; green,208; blue,6}, densely dotted]
\tikzstyle{shade}=[-, opacity=0.4, draw={rgb,255: red,128; green,128; blue,128}, line width=6.5, fill=none, line cap=round,rounded corners]
\tikzstyle{hyper}=[-, fill={rgb,255: red,48; green,255; blue,214}, fill opacity=0.6, draw={rgb,255: red,18; green,229; blue,85}, tikzit fill={rgb,255: red,48; green,255; blue,214}]
\tikzstyle{hyper1}=[-, fill={rgb,255: red,230; green,138; blue,9}, draw={rgb,255: red,255; green,128; blue,0}, fill opacity=0.5]
\tikzstyle{hyper2}=[-, fill={rgb,255: red,128; green,179; blue,255}, draw={rgb,255: red,46; green,87; blue,115}, fill opacity=0.55]
\tikzstyle{new edge style 0}=[-, fill={rgb,255: red,245; green,255; blue,39}, draw={rgb,255: red,168; green,170; blue,22}, fill opacity=0.6]

\tikzstyle{blue thick}=[-, tikzit draw=blue, draw=blue, line width=1.7pt]
\tikzstyle{thick}=[-, ultra thick]
\tikzstyle{dot}=[, dotted]
\tikzstyle{fade}=[-, opacity=0.5]
\tikzstyle{shade}=[-, opacity=0.23, fill={rgb,255: red,191; green,191; blue,191}, draw=none]

\tikzset{%
every path/.append style={line width = 0.8 pt}
}

\makeatletter
\patchcmd{\@settitle}{\uppercasenonmath\@title}{}{}{}
\patchcmd{\@setauthors}{\MakeUppercase}{}{}{}
\patchcmd{\section}{\scshape}{}{}{}
\makeatother
\makeatletter
\patchcmd{\@maketitle}
  {\ifx\@empty\@dedicatory}
  {\ifx\@empty\@date \else {\vskip2ex 
  \centering\footnotesize\@date\par\vskip1ex}\fi
   \ifx\@empty\@dedicatory}
  {}{}
\patchcmd{\@adminfootnotes}
  {\ifx\@empty\@date\else \@footnotetext{\@setdate}\fi}
  {}{}{}
\makeatother

\newcommand{\calG}[0]{\mathcal{G}}
\newcommand{\wt}{\mathrm{wt}}

\title{Higher $q$-Continued Fractions}
\date{}

\author[A. Burcroff]{Amanda Burcroff$^\clubsuit$}
\thanks{$^\clubsuit$ \href{mailto:aburcroff@math.harvard.edu}{aburcroff@math.harvard.edu}, Harvard University, supported by the Jack Kent Cooke Foundation}
\author[N. Ovenhouse]{Nicholas Ovenhouse$^\heartsuit$}
\thanks{$^\heartsuit$\href{mailto:nicholas.ovenhouse@yale.edu}{nicholas.ovenhouse@yale.edu} Yale University, supported by Simons Foundation grant 327929}
\author[R. Schiffler]{Ralf Schiffler$^\spadesuit$}
\thanks{$^\spadesuit$\href{mailto:schiffler@math.uconn.edu}{schiffler@math.uconn.edu} University of Connecticut, supported by NSF grant DMS-2054561}
\author[S. W. Zhang]{Sylvester W. Zhang$^\diamondsuit$}
\thanks{$^\diamondsuit$\href{mailto:swzhang@umn.edu}{swzhang@umn.edu} University of Minnesota, supported by NSF grants DMS-1949896 \& DMS-1745638}

\begin{document}

\begin{abstract}
    We introduce a $q$-analog of the higher continued fractions introduced by the last three authors in a previous work (together with Gregg Musiker), which are 
    simultaneously a generalization of the $q$-rational numbers of Morier-Genoud and Ovsienko. They are defined
    as ratios of generating functions for $P$-partitions on certain posets. We give matrix formulas for computing
    them, which generalize previous results in the $q=1$ case. We also show that certain properties enjoyed by
    the $q$-rationals are also satisfied by our higher versions.
\end{abstract}

\maketitle

\tableofcontents

\section{Introduction}

The aim of the present paper is to define and study \emph{$q$-deformed higher continued fractions}. These are certain rational functions in $q$
which are simultaneous generalizations of two concepts introduced relatively recently. On the one hand, the last three authors (together with Gregg Musiker)
recently defined a higher-dimensional generalization of continued fractions called \emph{higher continued fractions} \cite{mosz_23}. 
Given an integer sequence $(a_1,a_2,\dots,a_n)$ the corresponding higher continued fraction
is a tuple of rational numbers defined by some nested recurrences which resemble the usual definition of continued fractions. Alternatively, they may be
defined as the ratio of the sizes of two combinatorially defined sets. There are several equivalent combinatorial descriptions of these sets, including
(chains of) order ideals of fence posets, $P$-partitions on fence posets, tuples of bounded lattice paths (up to some equivalence), 
and $m$-dimer covers on certain planar graphs (called \emph{snake graphs}). The main objects of study in the present work are $q$-analogs of these higher continued fractions,
where we replace the cardinality of these sets with their generating functions for some natural statistics.

On the other hand, Morier-Genoud and Ovsienko defined \emph{$q$-rational numbers} \cite{mgo_20}, which are some extension of the $q$-integers $[n]_q = 1+q+q^2 + \cdots + q^{n-1}$
to the case of $n \in \Bbb{Q}$, obtained as a $q$-deformation of the continued fraction expression (see Section \ref{sec:higher_q_cfs} for the definition).
Our higher continued fractions (and their $q$-analogs) depend on an index $m$, and when $m=1$ it specializes to the $q$-rational numbers. To strengthen this
analogy, we show that our higher $q$-continued fractions satisfy some of the nice properties of the $q$-rationals.

For the remainder of the introduction, we will give some more specific definitions and background, and give an overview of our main results.

Let $\lambda$ and $\mu$ be partitions (identified with their Young diagrams) with $\mu \leq \lambda$, and $\lambda / \mu$ the corresponding skew shape.
A \emph{reverse plane partition} on the shape $\lambda / \mu$ is a filling of the boxes of the Young diagram with non-negative integers
which is weakly increasing in rows (from left-to-right) and columns (top-to-bottom). Thinking of the set of boxes in the diagram as a poset
(where $x < y$ if box $x$ is weakly south-east of box $y$),
a reverse plane partition is equivalent to a \emph{$P$-partition} on this poset 
(and from now on we will use the terms ``plane partition'' and ``$P$-partition'' interchangeably). The numbers in the boxes are the \emph{parts} of the $P$-partition.

For a skew shape $P = \lambda/\mu$, we let $\Omega_m(P)$ be the set of $P$-partitions whose parts are at most $m$.
Also, we will let $\Omega_m(P,q)$ be the generating function $\sum_{\sigma \in \Omega_m(P)} q^{\mathrm{wt}(\sigma)}$, where
the \emph{weight} of a $P$-partition is the sum of its parts.
Plane partitions on Young diagrams and skew shapes are very well-studied (see \cite{gessel_16} for a survey).
Here, we will be concerned with the case when $\lambda / \mu$ is a \emph{border strip}; i.e. it contains no $2 \times 2$ block. Border strips are also
called \emph{ribbon shapes} or \emph{skew hooks}. The underlying graph of the skew shape is also sometimes called a \emph{snake graph} (see e.g. \cite{propp} and \cite{cs_18}).
The corresponding poset is sometimes called a \emph{fence poset} \cite{or_23}.

\begin {figure}
\centering
\begin {tikzpicture}[scale=0.7]
    \draw (0,0) -- (0,5);
    \draw (1,0) -- (1,5);
    \draw (2,4) -- (2,5);
    \draw (3,4) -- (3,7);
    \draw (4,4) -- (4,7);
    \draw (5,6) -- (5,7);
    \draw (6,6) -- (6,7);
    \draw (7,6) -- (7,7);

    \draw (0,0) -- (1,0);
    \draw (0,1) -- (1,1);
    \draw (0,2) -- (1,2);
    \draw (0,3) -- (1,3);
    \draw (0,4) -- (4,4);
    \draw (0,5) -- (4,5);
    \draw (3,6) -- (7,6);
    \draw (3,7) -- (7,7);
\end {tikzpicture}
\caption {The border strip (i.e. snake graph) $\mathcal{G}[5,3,2,4]$.}
\label {fig:snake_example}
\end {figure}
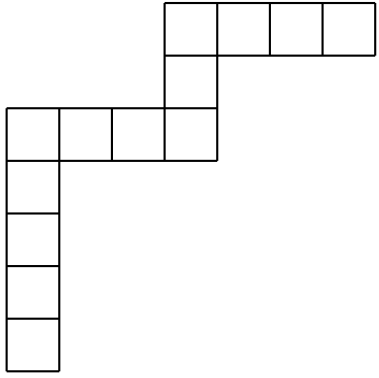

Let $P = \lambda/\mu$ be a border strip. 
Associated to $P$ is a sequence of positive integers $(a_1,a_2,\dots,a_n)$ describing its shape as follows. Each square of $P$ is attached to
the previous square's right or top edge, so we can describe the shape as a sequence of ``up'' and ``right'' steps.
Starting with the initial square, we have $(a_1-1)$ up steps, then $a_2$ right, then $a_3$ up, then $a_4$ right, etc, alternating. Finally,
we end with $(a_n-1)$ right steps (respectively up) if $n$ is even (respectively odd). See Figure \ref{fig:snake_example} for an example.
We will denote this skew shape (and the underlying graph) by $\mathcal{G}[a_1,\dots,a_n]$.

\begin {remark}
    We note that this is different from the construction in \cite{cs_18} for
    the snake graph of a continued fraction. What we call here $\mathcal{G}[a_1,\dots,a_n]$ is what \cite{propp} and \cite{claussen} called the \emph{dual snake graph}.
\end {remark}
Our first main result is the following.

\begin {thmA} (see Theorem \ref{thm:q_matrix_formula}) \\
    There exist matrices $R_m(q), L_m(q) \in \mathrm{GL}_{m+1}(\Bbb{Z}[q^\pm])$ such that for any sequence $(a_1,\dots,a_{2n})$, 
    and its corresponding snake graph $\mathcal{G} = \mathcal{G}[a_1,\dots,a_{2n}]$, the $(1,1)$-entry of the product
    \[X := R_m(q)^{a_1}L_m(q)^{a_2}R_m(q)^{a_3}L_m(q)^{a_4} \cdots R_m(q)^{a_{2n-1}}L_m(q)^{a_{2n}} \] 
    is equal to the $P$-partition generating function $\Omega_m(\mathcal{G}, q)$. 
    Moreover, every entry of this matrix is the generating function for some subset of $P$-partitions
    defined by restricting the values in the first and last box.
\end {thmA}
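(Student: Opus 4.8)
The plan is to prove the identity by a transfer-matrix argument along the border strip, obtained by refining the $q=1$ argument so as to keep track of the weight statistic. Fix $m$ and write $\mathcal{G}=\mathcal{G}[a_1,\dots,a_{2n}]$, with boxes $b_1,b_2,\dots,b_N$ listed in order along the strip, where $N=a_1+\cdots+a_{2n}-1$. Since $\mathcal{G}$ contains no $2\times2$ block, its poset is a fence: the only comparable pairs of boxes are consecutive ones, and the relation imposed on the parts at $b_k,b_{k+1}$ is ``$\le$'' for each step of one type and ``$\ge$'' for each step of the other type (with the orientation convention fixing which is which). Reading the shape off from $(a_1,\dots,a_{2n})$, the steps form maximal alternating runs of lengths $a_1-1,\,a_2,\,a_3,\dots,a_{2n-1},\,a_{2n}-1$, so a $P$-partition $\sigma\in\Omega_m(\mathcal{G})$ is exactly a word $(\sigma(b_1),\dots,\sigma(b_N))\in\{0,1,\dots,m\}^{N}$ obeying these local inequalities, with $\mathrm{wt}(\sigma)=\sum_k\sigma(b_k)$.

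For $0\le k\le N$ I would introduce the vector $v^{(k)}\in\mathbb{Z}[q^{\pm}]^{m+1}$ whose $i$-th entry ($0\le i\le m$) is the generating function $\sum q^{\mathrm{wt}}$ over all partial $P$-partitions on $b_1,\dots,b_k$ with $\sigma(b_k)=i$, under a fixed convention about whether the part at $b_k$ is already counted in the weight. The passage $v^{(k)}\mapsto v^{(k+1)}$ across a single step of the first (resp.\ second) type is then left multiplication by a fixed $(m+1)\times(m+1)$ matrix $R_m(q)$ (resp.\ $L_m(q)$) whose $(i,j)$-entry is a monomial in $q$ times the indicator that the inequality relating the parts $i$ and $j$ is permitted; crucially this matrix is independent of $k$. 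These are the $q$-deformations of the matrices in the $q=1$ statement of \cite{mosz_23}, and each is a product of a $0/1$ triangular matrix with the diagonal matrix $\mathrm{diag}(1,q,\dots,q^m)$ (in one order or the other); in particular $\det R_m(q)$ and $\det L_m(q)$ are monomials $\pm q^{c}$, so $R_m(q),L_m(q)\in\mathrm{GL}_{m+1}(\mathbb{Z}[q^{\pm}])$.

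The next step is to reconcile the exponents. The run-lengths sum to $(\sum_i a_i)-2$, while the product $X=R_m(q)^{a_1}L_m(q)^{a_2}\cdots R_m(q)^{a_{2n-1}}L_m(q)^{a_{2n}}$ has exponents summing to $\sum_i a_i$; the two surplus matrix factors, one at each end, absorb the initialization at $b_1$ and the final summation over $\sigma(b_N)$. Equivalently, $X$ computes the $P$-partition generating function of $\mathcal{G}$ padded by one extra box at each end, and extracting the $(1,1)$-entry forces each padding box to take the extreme value in $\{0,\dots,m\}$ compatible with the inequality it faces; since a padding box with an extreme value imposes no constraint on its neighbor and, with the boundary factors normalized as they should be, carries no weight, this is a cost-free operation and yields $X_{11}=\Omega_m(\mathcal{G},q)$. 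Carrying this out rigorously means verifying that a run of $k$ steps of one type together with the relevant boundary effect equals exactly the claimed power of $R_m(q)$ or $L_m(q)$, and that the corner box shared by two consecutive runs is counted exactly once. The ``moreover'' statement is then automatic from the same picture: writing $e_1,\dots,e_{m+1}$ for the standard basis, $Xe_j$ is, by construction, the vector of partial generating functions over $P$-partitions on all of $\mathcal{G}$ with the part at the terminal box $b_N$ restricted in the manner encoded by $j$ (an equality, or an inequality, possibly up to a monomial), and left multiplication by $e_i^{\top}$ imposes the analogous restriction on $b_1$ encoded by $i$, so $X_{ij}=e_i^{\top}Xe_j$ is the generating function for precisely the subset of $\Omega_m(\mathcal{G})$ cut out by these two boundary conditions.

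The hardest part will be the reconciliation step: pinning down $R_m(q)$ and $L_m(q)$ exactly --- the triangularity, the placement of the diagonal factor, and the precise monomial weights --- and then checking the ``$a_i$ versus $a_i-1$'' shift together with the correct treatment of the shared corner boxes, so that it really is the $(1,1)$-entry, and not some other corner, that performs the full summation. Everything else --- invertibility, and the combinatorial identification of the off-diagonal entries --- then follows formally. As an alternative to the transfer-matrix bookkeeping one can argue by induction on $n$: deleting the last run of $a_{2n}-1$ boxes decomposes $\Omega_m(\mathcal{G},q)$ and its boundary-restricted variants according to the parts in that run, and this decomposition is precisely right multiplication of $X$ by $L_m(q)^{a_{2n}}$, which simultaneously proves the formula and re-derives the meaning of each entry.
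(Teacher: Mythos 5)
Your primary argument --- a per-step transfer matrix in which each up/right step of the fence contributes one factor of $R_m(q)$ or $L_m(q)$, with the two surplus factors at the ends serving as initialization and termination --- is sound and genuinely different in organization from the paper's proof. The paper instead first derives a closed formula for the entries of $R_m(q)^a$ (namely $q^{a(m+1-j)}\mbinom{a}{j-i}_q$, proved via a $q$-hockey-stick identity), packages $R_m(q)^aW_m$ and $W_mL_m(q)^a$ into Hankel-type matrices $\Lambda_m^{\pm}(q,a)$, and then inducts on the number of runs $n$: the matrix recursion $X_{\mathcal{G}}=X_{\mathcal{G}'}\Lambda_m^{-}(q,a_n)$ is matched against the combinatorial decomposition obtained by conditioning on the value in the corner box where the last segment attaches, the $q$-multichoose coefficient being the generating function of a chain with parts in a prescribed interval. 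Your route buys a cleaner conceptual picture (no closed formula for powers is needed, and the identification of $X_{ij}=e_i^{\top}Xe_j$ with a boundary-restricted generating function is immediate), while the paper's route produces explicit entry formulas that are reused later in the paper; your closing alternative --- induction on $n$ by stripping the last run of $a_{2n}-1$ boxes --- is precisely the paper's argument. The reconciliation you flag as the hard part does work out as you predict (boxes indexed from bottom-left, row/column index $i$ encoding the value bound $m+1-i$, up steps giving $R$, right steps giving $L$, each box's weight collected exactly once so corner boxes cause no double counting), with one caveat: the termination is not cost-free. With $R_m(q)=R_mQ_m$ and $Q_m=\mathrm{diag}(q^m,\dots,q,1)$, the terminating factor $L_m(q)e_j$ carries the weight $q^{m+1-j}$ of the virtual final box, so the correct identity (Theorem \ref{thm:q_matrix_formula}) is $X_{ij}=q^{m+1-j}\,\Omega_m^{ij}(\mathcal{G},q)$ for even $n$; the resulting $q^{m}$ prefactor on the $(1,1)$-entry is suppressed in the statement of Theorem A, so your ``possibly up to a monomial'' hedge is exactly right and should be made explicit rather than argued away.
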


In \cite{mosz_23}, the authors defined \emph{higher continued fractions}, which are related to the enumeration of $P$-partitions on border strips. 
Given a sequence $(a_1,\dots,a_n)$, let $\mathcal{G} = \mathcal{G}[a_1,\dots,a_n]$, and $\mathcal{G}' = \mathcal{G}[1,a_2-1,a_3,\dots,a_n]$. In other words,
$\mathcal{G}'$ is obtained from $\mathcal{G}$ by deleting the first vertical column of boxes.
We associate to this sequence the following rational number, called the \emph{$m$-continued fraction}
\[ r_m(a_1,\dots,a_n) := \frac{\left|\Omega_m(\mathcal{G})\right|}{\left| \Omega_m(\mathcal{G}') \right|} \]
We also think of this as a map $\Bbb{Q} \to \Bbb{Q}$, where $r_m(x) := r_m(a_1,\dots,a_n)$ if $x \in \Bbb{Q}$ has continued fraction $[a_1,\dots,a_n]$.
More generally, there is a family of maps $r_{i,m}(x)$ for all $i \leq m$, where the one given above corresponds to the case $i=m$. The more general
case will be described in the next section.

We define \emph{$q$-deformed higher continued fractions} by replacing $\left| \Omega_m(\mathcal{G}) \right|$ with the polynomial $\Omega_m(\mathcal{G},q)$:
\[ r_m^q(a_1,\dots,a_n) := \frac{\Omega_m(\mathcal{G},q)}{\Omega_m(\mathcal{G}',q)} \]
By Theorem 4 in \cite{mgo_20}, when $m=1$ this is equal to Morier-Genoud and Ovsienko's $q$-rational number\footnote{The theorem in \cite{mgo_20} says the
numerator of this $q$-continued fraction is the generating function for \emph{closure subsets} of some directed graph, but it is easy to see that there is
a weight-preserving bijection between the set of closure subsets and the set of $P$-partitions with parts $0$ and $1$.}
\[ [a_1,\dots,a_n]_q := [a_1]_q + \cfrac{q^{a_1}}{[a_2]_{q^{-1}} + \cfrac{q^{-a_2}}{[a_3]_q + \cfrac{q^{a_3}}{[a_4]_{q^{-1}} + \cfrac{a^{-a_4}}{\ddots}}}} \]
One of the remarkable properties of the $q$-rationals is the ``stabilization phenomenon'' \cite{mgo_22}, which says that for a sequence $x_n$ approaching an irrational limit,
the power series expansions of the $q$-rationals stabilize to some well-defined power series, which we can take as the $q$-analog of the irrational number.
Our second main result is that the same is true in the higher case.

\begin {thmB}{(see Theorem \ref{thm:stabilization})} \\
    Let $x_n \in \Bbb{Q}$ be a sequence of rational numbers converging to an irrational number $x$. The power series expansions of $r^q_m(x_n)$ stabilize as $n \to \infty$,
    giving a well-defined power series which can be taken as the definition of $r^q_m(x)$. Moreover, the rate at which the coefficients stabilize does not
    depend on $m$.
\end {thmB}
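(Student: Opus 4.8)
\emph{Proof idea.} The plan is to combine the matrix formula of Theorem~\ref{thm:q_matrix_formula} with an elementary $q$-adic contraction estimate, in the spirit of the $m=1$ argument of \cite{mgo_22}. First note that $r_m^q(x)$ is a well-defined element of $\mathbb{Z}[[q]]$: the all-zero filling is the unique $P$-partition of weight $0$, so $\Omega_m(\mathcal{G}',q)\in 1+q\,\mathbb{Z}_{\ge 0}[q]$ is a unit in $\mathbb{Z}[[q]]$, while numerator and denominator of $r_m^q(x)$ both have nonnegative integer coefficients. Thus ``the power series expansions of $r_m^q(x_n)$ stabilize'' is exactly the statement that $\big(r_m^q(x_n)\big)_n$ is Cauchy for the $q$-adic metric on $\mathbb{Z}[[q]]$, and the last sentence of the theorem asks that the Cauchy modulus be controlled by a function of the target degree and of the sequence $(x_n)$ only. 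I also use the elementary fact that $r_m^q(x)$ depends on $x\in\mathbb{Q}$ alone and not on which of its two finite continued fractions is written, so that the convenient representation may always be chosen.

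Let $[c_1,c_2,\dots]$ be the infinite continued fraction of the irrational limit $x$. By continuity of continued fractions, for every $K$ there is an $N=N(K)$ such that the continued fraction of $x_n$ begins $[c_1,\dots,c_K,\dots]$ whenever $n\ge N$; writing $K(n)$ for the length of the prefix that $x_n$ shares with $x$, this says $K(n)\to\infty$. Applying Theorem~\ref{thm:q_matrix_formula} (padding to even length if needed), the matrix product attached to $x_n$ factors as $W_K\,T_n$, where $W_K$ is the product of the first $t_K:=c_1+\cdots+c_K$ of the matrices $R_m(q),L_m(q)$ --- the same matrix for every $n\ge N$ --- and $T_n$ collects the remaining factors; the accompanying combinatorial identifications then let us write $r_m^q(x_n)=\varphi(W_K v_n)$, where $v_n$ is the column of $P$-partition generating functions coming from the tail, $\varphi$ is a fixed ratio of two linear forms, and the denominator form evaluates to the unit $\Omega_m(\mathcal{G}',q)$ on all the relevant vectors. (I suppress the precise bookkeeping of the linear forms and of the $R$/$L$ alternation, which is routine given Theorem~\ref{thm:q_matrix_formula}.) It therefore suffices to bound the $q$-valuation of $\varphi(W_K v_n)-\varphi(W_K v_{n'})$ from below by a quantity that grows with $K$ and does not depend on $m$.

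The crux is the following observation about the matrices. Reduced modulo $q$, each of $R_m(q)$ and $L_m(q)$ counts only the all-zero filling of the corresponding run of boxes, hence has rank one; equivalently, \emph{every $2\times2$ minor of $R_m(q)$ and of $L_m(q)$ is divisible by $q$} (this is also visible from the explicit triangular-geometric shape of these matrices, whose determinant is the monomial $q^{\binom{m+1}{2}}$). By the Cauchy--Binet formula applied repeatedly, every $2\times2$ minor of the product $W_K$ of $t_K$ such matrices is divisible by $q^{t_K}$. Now $\varphi(W_K v_n)-\varphi(W_K v_{n'})$ equals, over a denominator that is a product of two units, a $\mathbb{Z}[[q]]$-linear combination of the $2\times2$ minors of $W_K$ (obtained by expanding the relevant $2\times2$ determinant by Cauchy--Binet), hence is divisible by $q^{t_K}$. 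So $r_m^q(x_n)\equiv r_m^q(x_{n'})\pmod{q^{t_K}}$ for all $n,n'\ge N(K)$, and since $t_K=c_1+\cdots+c_K\ge K$ is independent of $m$ and tends to infinity, the sequence $\big(r_m^q(x_n)\big)_n$ is $q$-adically Cauchy with an $m$-independent modulus. Its limit is the promised power series $r_m^q(x)$, and for each $d$ the coefficient of $q^d$ in $r_m^q(x_n)$ is eventually constant in $n$, the threshold being independent of $m$.

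The only genuinely substantive step, I expect, is the minor estimate of the third paragraph --- together with the observation that it forces the contraction to gain at least one power of $q$ per continued-fraction step \emph{regardless of $m$}, even though the matrices $R_m(q),L_m(q)$ themselves grow with $m$; this is precisely what produces the $m$-independence. Everything else is bookkeeping: extracting from Theorem~\ref{thm:q_matrix_formula} the exact description of $r_m^q$ as a ratio of two linear forms on a matrix-product column with unit denominator, handling the $R$/$L$ alternation and the even/odd parity of the continued fraction, checking that the denominator form stays a unit along the whole product, and confirming that the $q$-adic Cauchy estimate does imply coefficient-wise stabilization.
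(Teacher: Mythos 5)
Your approach is sound and in fact coincides with the paper's \emph{alternate} proof, which is only sketched in the remark immediately following Theorem \ref{thm:stabilization}: there the authors likewise observe that every $2\times 2$ minor of $R_m(q)Y$ and $L_m(q)Y$ is a positive power of $q$ times a sum of $2\times 2$ minors of $Y$, and conclude by induction that the $2\times 2$ minors of the full product are divisible by $q^{a_1+\cdots+a_n}$. The paper's \emph{main} proof is different: it realizes $R_m(q)$ and $L_m(q)$ as path-weight matrices of planar networks and applies Lindstr\"om--Gessel--Viennot, so that the relevant $2\times 2$ minor becomes a positive sum over pairs of non-intersecting paths; identifying the minimal-weight pair gives the sharper exponent $q^{a_1+\cdots+a_n-1}$ of Theorem \ref{thm:stabilization}, whereas your Cauchy--Binet count of elementary factors through the common prefix gives only $q^{c_1+\cdots+c_K}$. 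That weaker bound still suffices for the statement of Theorem B (stabilization with an $m$-independent modulus), and your formulation for arbitrary pairs $x_n,x_{n'}$ with a common prefix is a clean way to bypass the reduction to consecutive convergents.

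One step you dismiss as ``routine bookkeeping'' is exactly where the $m$-independence is at stake, and as written it does not close. By Theorem \ref{thm:q_matrix_formula}, the denominator entries are $X_{m+1,1}=q^{m}\,\Omega_m^{m+1,1}(\mathcal{G},q)$: they are \emph{not} units but $q^{m}$ times units. If you take the numerator of $r^q_{im}(x_n)-r^q_{im}(x_{n'})$ to be the raw $2\times2$ minor of $W_K\,[u_n\,|\,u_{n'}]$ (with $u_n$ the first column of the tail product) and divide by the actual denominator, your estimate yields only divisibility by $q^{t_K-2m}$, which is $m$-dependent. The fix is to note that the first column of $R_m(q)$ is $q^m e_1$ and the first column of $L_m(q)$ is $q^m(1,\dots,1)^\top$, so every entry of $u_n$ is divisible by $q^m$; writing $u_n=q^m\tilde u_n$ with $\tilde u_n$ a polynomial vector, the Cauchy--Binet coefficients (the $2\times2$ minors of $[u_n\,|\,u_{n'}]$) contribute an extra $q^{2m}$ that exactly cancels the valuation of the denominator, leaving the $m$-independent bound $q^{t_K}$. (The paper's remark handles the same issue by noting that minors taken in the first and last columns gain an extra factor of $q^{m-1}$ and by working with $q^m\bigl(r^q_{im}(x_n)-r^q_{im}(x_{n-1})\bigr)$.) With that observation made explicit, your argument is complete.
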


Another fundamental property of the $q$-rationals is positivity. It was shown in \cite{mgo_20} that if $\frac{r}{s} > \frac{a}{b}$, then $\mathcal{R}(q)\mathcal{B}(q) - \mathcal{A}(q)\mathcal{S}(q)$ has positive coefficients, where $\mathcal{R}$, $\mathcal{S}$, $\mathcal{A}$, $\mathcal{B}$ are the numerators and denominators of the corresponding $q$-rationals. Basser, Ovenhouse, and Sakarda have shown that in the $q=1$ case, the higher continued fractions $r_{i,m}(x)$ are increasing functions on $\mathbb{R}_{>0}$ \cite{BOS}. Our final main result
is that this positivity property also holds for the higher $q$-rationals.

\begin {thmC}
    Suppose $\frac{r}{s} > \frac{a}{b} \geq 1$. Let $0 \leq i \leq m$, and denote $r_m^q \left( \frac{r}{s} \right) = \frac{\mathcal{R}(q)}{\mathcal{S}(q)}$ 
    and $r_m^q \left( \frac{a}{b} \right) = \frac{\mathcal{A}(q)}{\mathcal{B}(q)}$.
    Then $\mathcal{R}(q)\mathcal{B}(q) - \mathcal{A}(q)\mathcal{S}(q)$ is a polynomial with positive integer coefficients.
\end {thmC}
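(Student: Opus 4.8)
The plan is to lift the matrix proof of positivity of the $q$-rationals from \cite{mgo_20} to the $(m+1)\times(m+1)$ setting provided by Theorem~A, and to isolate a single ``monotonicity at the point of divergence'' lemma as the one substantial new ingredient. First I would normalize: replace the continued fractions of $\frac{r}{s}$ and $\frac{a}{b}$ by equivalent ones of even length (using $[c_1,\dots,c_k]=[c_1,\dots,c_k-1,1]$), so that Theorem~A applies, and write $\frac{r}{s}=[a_1,\dots,a_{2n}]$, $\frac{a}{b}=[b_1,\dots,b_{2n'}]$ with all partial quotients $\geq 1$; the hypothesis $\frac{r}{s}>\frac{a}{b}\geq 1$ guarantees that these are honest positive continued fractions. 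By Theorem~A, $\mathcal{R}(q)$ is the $(1,1)$-entry of $M_{r/s}:=R_m(q)^{a_1}L_m(q)^{a_2}\cdots L_m(q)^{a_{2n}}$ and $\mathcal{S}(q)$ is the prescribed entry of $M_{r/s}$ cut out by the parameter $i$ (using the part of Theorem~A which identifies every entry as a generating function for $P$-partitions with restricted boundary values); likewise $\mathcal{A}(q),\mathcal{B}(q)$ are the corresponding entries of the product $M_{a/b}$ for $\frac{a}{b}$.

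Next I would exploit the common prefix. Let $t$ be the first index at which the two partial-quotient sequences differ (allowing the case where one is a prefix of the other), and factor $M_{r/s}=C\,N$ and $M_{a/b}=C\,N'$, where $C$ is the matrix product of the common prefix $[a_1,\dots,a_{t-1}]=[b_1,\dots,b_{t-1}]$ and $N,N'$ are the ``tails'' beginning at position $t$. Expanding the relevant entries of $CN$ and $CN'$ along the factor $C$ and collecting terms produces a bilinear identity of the form
\[
\mathcal{R}(q)\mathcal{B}(q)-\mathcal{A}(q)\mathcal{S}(q)=\sum_{j<l}\Delta_{j,l}(C)\cdot\Theta_{j,l}(N,N'),
\]
where $\Delta_{j,l}(C)$ is a $2\times2$ minor of $C$ on a fixed pair of rows and columns $j,l$, and $\Theta_{j,l}(N,N')$ is the corresponding $2\times2$ expression built from matching entries of $N$ and $N'$. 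So it suffices to show each summand has nonnegative coefficients. For the factor $\Delta_{j,l}(C)$ I would prove that $R_m(q)$ and $L_m(q)$ are totally nonnegative over $\mathbb{Z}_{\geq 0}[q^{\pm 1}]$, i.e. every minor is a polynomial with nonnegative coefficients: given the explicit shape of these generators in Theorem~\ref{thm:q_matrix_formula} (whose entries are, up to powers of $q$, $q$-binomial coefficients) this is classical, or one may deduce it from the combinatorial meaning of the entries via a Lindström--Gessel--Viennot argument on non-crossing lattice paths. Total nonnegativity is preserved under products by the Cauchy--Binet formula, so $C$ is totally nonnegative and every $\Delta_{j,l}(C)$ is coefficient-positive (the degenerate cases, e.g.\ $i=0$, give only zero summands).

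It remains to show $\Theta_{j,l}(N,N')\succeq 0$ for all $j<l$, and \textbf{this is the step I expect to be the main obstacle}. Parity of $t$ together with $\frac{r}{s}>\frac{a}{b}$ forces the partial quotients at the divergence point to satisfy $a_t>b_t$ when $t$ is odd and $a_t<b_t$ when $t$ is even; after pulling out the common power of the relevant generator and applying total nonnegativity and Cauchy--Binet one more time, the problem reduces to a core inequality asserting that, for $d\geq 1$ and $G_m\in\{R_m(q),L_m(q)\}$, the $2\times2$ expressions formed from $G_m^{\,d}p$ and $w$ have nonnegative coefficients, where $p,w$ are first columns of further generator products. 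This is precisely the higher/$q$ analog of the key positivity lemma in \cite{mgo_20} and of the monotonicity of the maps $r_{i,m}$ established in the $q=1$ case in \cite{BOS}. I would attack it in one of two ways: (i) by induction on the total length of the continued fractions, the base case being the pure single-generator comparison $G_m^{\,d}$ (verifiable from the explicit form of the generators) and the inductive step peeling one more generator off $p$ and $w$ and reusing total nonnegativity; or (ii) combinatorially, interpreting the two sides of the core inequality as generating functions for pairs of $P$-partitions on the tail snake graph with prescribed boundary values, and constructing an explicit weight-preserving injection realizing the inequality --- an $m$-dimer instance of the snake-graph grafting/resolution identities of \cite{cs_18}. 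Once the core inequality is in hand, every summand of the bilinear identity is coefficient-positive; the difference is moreover not identically zero because $\frac{r}{s}>\frac{a}{b}$ forces $t$ to exist with $|a_t-b_t|\geq 1$, so at least one coefficient is strictly positive, completing the proof.
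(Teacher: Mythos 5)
Your reduction is sound as far as it goes: writing $\mathcal{R}(q)\mathcal{B}(q)-\mathcal{A}(q)\mathcal{S}(q)$ as a $2\times 2$ determinant built from the first columns of $M_{r/s}=CN$ and $M_{a/b}=CN'$, and expanding by Cauchy--Binet into $\sum_{j<l}\Delta_{j,l}(C)\,\Theta_{j,l}(N,N')$, is correct, and total nonnegativity of $C$ does follow since $R_m(q)=R_mQ_m$ and $L_m(q)=L_mQ_m$ are products of totally nonnegative matrices. The problem is that this reduction does not actually reduce the difficulty. After you also strip off the common power of the generator at the divergence point, the quantity $\Theta_{j,l}(N,N')=n_jn'_l-n_ln'_j$ involves the first columns of two tail products that begin with \emph{different} generators ($R_m(q)$ versus $L_m(q)$); that is, you are left with proving the full theorem in the special case where the two continued fractions share no common prefix, and moreover for \emph{all} pairs of rows $j<l$ rather than just the single pair $\{m+1-i,\,m+1\}$ that the theorem requires. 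You explicitly flag this ``core inequality'' as the main obstacle and do not prove it, and neither sketched attack closes it: attack (i) breaks down because $n$ and $n'$ come from different matrix products, so there is no common generator to peel off both simultaneously, and the base case is not a single-matrix computation. So the proposal is a correct framing plus an unproved lemma that carries essentially all of the content of the theorem.

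For comparison, the paper avoids the matrix formalism entirely and argues directly on $P$-partitions: it realizes $\mathcal{S}(q)\mathcal{A}(q)$ and $\mathcal{R}(q)\mathcal{B}(q)$ as generating functions for pairs $(\sigma,\alpha)\in S\times A$ and $(\rho,\beta)\in R\times B$ of $P$-partitions on the two snake graphs, and constructs a weight-preserving injection $S\times A\to R\times B$ by swapping the labels of $\sigma$ and $\alpha$ on all boxes up to the first ``swappable'' position inside the common initial segment of the two snake graphs (such a position always exists, essentially because $\mathcal{G}\left(\frac{r}{s}\right)$ extends strictly further along the straight segment where the two graphs first diverge). Injectivity follows because the first swappable position is unchanged by the swap. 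If you want to salvage your approach, your option (ii) is the right direction, but you would essentially be re-deriving this injection; it is cleaner to build it on the whole graphs at once, as the paper does, than to first pass through Cauchy--Binet.
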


The structure of the rest of the paper is as follows. In Section \ref{sec:enumeration}, we establish notation for our elementary matrices, and then we
re-state and prove (in the current language of $P$-partitions) a theorem from \cite{mosz_23} which is the $q=1$ version of Theorem A.
In Section \ref{sec:q_enumeration}, we define $q$-deformations of the elementary matrices, and we prove Theorem A. In Section \ref{sec:higher_q_cfs},
we review the definition and properties of Morier-Genoud and Ovsienko's $q$-rationals, and we define the $q$-analogs of the higher continued fractions.
In Section \ref{sec:stabilization}, we prove Theorem B, establishing the stabilization phenomenon for higher $q$-rationals.
In Section \ref{sec:positivity}, we prove Theorem C, establishing the positivity phenomenon.

\section {Enumeration of Plane Partitions} \label{sec:enumeration}

Let $R_m$ and $L_m$ be the upper and lower triangular $(m+1) \times (m+1)$ matrices with $(R_m)_{ij} = 1$ for $i \leq j$ and $(L_m)_{ij} = 1$ for $i \geq j$.
For example, when $m=1,2,3$, these look like:
\[ R_1 = \begin{pmatrix}1&1\\0&1\end{pmatrix}, \quad R_2 = \begin{pmatrix}1&1&1\\0&1&1\\0&0&1\end{pmatrix}, \quad R_3 = \begin{pmatrix}1&1&1&1\\0&1&1&1\\0&0&1&1\\0&0&0&1\end{pmatrix} \]
The number of $P$-partitions of $\mathcal{G}[a_1,\dots,a_n]$ with parts at most $m$ is related to the matrix product $R_m^{a_1}L_m^{a_2}R_m^{a_3}L_m^{a_4} \cdots$ \cite{mosz_23}. 
This matrix product can be expressed in another equivalent way, which we will now explain.
Let $W_m$ be the following $(m+1)\times(m+1)$ anti-diagonal matrix
\[ W_m := \begin{pmatrix}
0&0&\cdots&0&1\\
0&0&\cdots&1&0\\
&&\iddots\\
0&1&\cdots &0&0\\
1&0&\cdots &0&0
\end{pmatrix},
\] 
and define $\Lambda_m(a) = R_m^aW_m = W_mL_m^a$. It is
not hard to check that the entries of this matrix are $\Lambda_m(a)_{ij} = \mbinom{a}{m+2-i-j} = \binom{a+m+1-i-j}{m+2-i-j} = \binom{a+m+1-i-j}{a-1}$.\footnote{We are using the ``multichoose''
notation $\mbinom{n}{k} := \binom{n+k-1}{k}$.} 
In other words, $\Lambda_m(a)$ is a Hankel matrix, with every entry on the $k^\mathrm{th}$ skew diagonal equal to $\mbinom{a}{k} = \binom{a+k-1}{k}$.
Since $\Lambda_m(a)\Lambda_m(b) = R_m^a W_m^2L_m^b = R_m^a L_m^b$, we can also equivalently consider the matrix product $\Lambda_m(a_1)\Lambda_m(a_2) \cdots \Lambda_m(a_n)$. The main result of
\cite{mosz_23} gives a combinatorial interpretation of the entries of this matrix product. We will re-state a slightly different, but equivalent, version of this
theorem now, after a couple more definitions.

Let $\mathcal{G} = \mathcal{G}[a_1,\dots,a_n]$. Recall that $\Omega_m(\mathcal{G})$ is the set of $P$-partitions of $\mathcal{G}$ with parts at most $m$.
The boxes of $\mathcal{G}$ (and hence the parts of a $P$-partition) are naturally indexed from bottom-left to top-right. Let $\Omega_m^{i,j}(\mathcal{G})$ be the subset of $\Omega_m(\mathcal{G})$ consisting of
$P$-partitions whose first part is at most $m+1-i$ and whose last part is at most $m+1-j$. Similarly, let $\overline{\Omega}_m^{i,j}(\mathcal{G})$ be those
with first part at most $m+1-i$ and last part at least $j-1$.
In particular, $\Omega_m^{1,1}(\mathcal{G}) = \overline{\Omega}_m^{1,1}(\mathcal{G}) = \Omega_m(\mathcal{G})$.
The following is an equivalent restatement of Theorem 3.8 from \cite{mosz_23} (although phrased in somewhat different terminology).

\begin {theorem} [\cite{mosz_23}] \label{thm:matrix_formula}
    Let $(a_1,a_2,\dots,a_n)$ be a sequence of positive integers, and let $\mathcal{G} = \mathcal{G}[a_1,\dots,a_n]$. Define $X$ as the following matrix product:
    \[ X := \Lambda_m(a_1) \Lambda_m(a_2) \cdots \Lambda_m(a_n) \]
    Then $X_{ij} = \left| \Omega_m^{i,j}(\mathcal{G}) \right|$ if $n$ is even, and $X_{ij} = \left| \overline{\Omega}_m^{i,j}(\mathcal{G}) \right|$ if $n$ is odd. 
    In particular, we always have $X_{11} = \left| \Omega_m(\mathcal{G}) \right|$.
\end {theorem}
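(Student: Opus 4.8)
The plan is to prove this by induction on $n$, using the combinatorial structure of $\mathcal{G}[a_1,\dots,a_n]$ as a union of $\mathcal{G}[a_1,\dots,a_{n-1}]$ with an extra strip of $a_n - 1$ (or $a_n$) boxes glued on at the far end, and matching this decomposition to right-multiplication by $\Lambda_m(a_n)$. First I would set up the base case $n = 1$: $\mathcal{G}[a_1]$ is a single column of $a_1$ boxes, a $P$-partition is a weakly increasing sequence of length $a_1$ with entries in $\{0,\dots,m\}$, and $\overline{\Omega}_m^{i,j}(\mathcal{G}[a_1])$ counts those whose first entry is $\le m+1-i$ and last entry is $\ge j-1$; one checks directly this cardinality is $\mbinom{a_1}{m+2-i-j} = \binom{a_1+m-i-j+1}{a_1-1}$, matching the stated Hankel entries of $\Lambda_m(a_1)$. (The parity bookkeeping — $\Omega$ versus $\overline{\Omega}$, and which of the first/last box gets an ``at most'' versus ``at least'' constraint — is dictated by whether the last run of boxes is horizontal or vertical, which alternates with $n$.)

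For the inductive step, suppose the claim holds for $\mathcal{G}[a_1,\dots,a_{n-1}]$. The graph $\mathcal{G}[a_1,\dots,a_n]$ is obtained by attaching a run of boxes to the last box of $\mathcal{G}[a_1,\dots,a_{n-1}]$; a $P$-partition of the larger shape restricts to one of the smaller shape, and conversely one must count the number of ways to extend a $P$-partition with prescribed value $t$ in the last box of $\mathcal{G}[a_1,\dots,a_{n-1}]$ across the new run so that the new final box has a prescribed value. This extension count is exactly a product of a ``multichoose'' factor for the new monotone run, and summing over the shared intermediate value $t$ realizes precisely the matrix product $(X')_{i,k} \cdot \Lambda_m(a_n)_{k',j}$ summed over the matching index — i.e., $X = X' \Lambda_m(a_n)$. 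The key bookkeeping is that when we pass from $n-1$ to $n$, the constraint on the box shared between the old shape and the new run flips between an upper bound and a lower bound (because going around a corner of the border strip reverses the direction of the order relation), and this is exactly what the anti-diagonal $W_m$ factor inside $\Lambda_m$ encodes: $\Lambda_m(a) = R_m^a W_m$ reads ``at most $m+1-i$'' on one side and, after the reversal $W_m$, ``at least'' on the other. I would make this precise by the identity $\sum_t (\text{\# of runs from a box $\le c$ down/up to a box valued $t$}) \cdot (\text{\# of runs from $t$ to a box $\le c'$}) = \Lambda_m(a)$-type convolution, which is the Vandermonde/Chu convolution for multichoose coefficients.

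The main obstacle I expect is the careful handling of the endpoint conventions — keeping straight, through the induction, exactly which box of which sub-shape carries which inequality, and confirming that the parity of $n$ toggles $\Omega \leftrightarrow \overline{\Omega}$ in the way claimed. This is genuinely delicate because the border strip alternates between vertical and horizontal runs, the poset order relation reverses at each corner, and the matrix $\Lambda_m(a)$ secretly packages both a monotone-run enumeration and an order-reversal. A clean way to sidestep some of this is to prove the statement first for the equivalent product $R_m^{a_1} L_m^{a_2} \cdots$ directly — each factor $R_m$ or $L_m$ handling a single up-run or right-run and transparently implementing ``the next box is $\ge$'' or ``$\le$'' the current one — and only at the end translate to the $\Lambda_m$ form via $\Lambda_m(a)\Lambda_m(b) = R_m^a L_m^b$ and $W_m^2 = I$. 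Since this is a restatement of Theorem 3.8 of \cite{mosz_23}, one could alternatively cite that result and only verify that the present indexing of $\Omega_m^{i,j}$ and $\overline{\Omega}_m^{i,j}$ matches theirs; but giving the self-contained induction above is cleaner and sets up the $q$-deformation in the next section, where the same argument runs verbatim with each multichoose coefficient replaced by its $q$-analog (a Gaussian binomial) tracking the sum of parts.
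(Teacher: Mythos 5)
Your plan is the same as the paper's: induct on $n$, peel off the last straight run, condition on the value assigned to the new corner box (which forces an ``at least'' constraint on the last box of the smaller shape, hence the $\Omega \leftrightarrow \overline{\Omega}$ toggle with parity), and observe that the resulting multichoose convolution is exactly right-multiplication by $\Lambda_m(a_n)$. One bookkeeping slip in your base case: under the paper's conventions $\mathcal{G}[a_1]$ is a column of $a_1-1$ boxes (the total box count of $\mathcal{G}[a_1,\dots,a_n]$ is $a_1+\cdots+a_n-1$), so the chain has $p=a_1-1$ elements and the count of fillings with values in $\{j-1,\dots,m+1-i\}$ is $\mbinom{p+1}{m+2-i-j}=\mbinom{a_1}{m+2-i-j}$, matching $\Lambda_m(a_1)_{ij}$; with $a_1$ boxes as you wrote you would get $\mbinom{a_1+1}{m+2-i-j}$ and the base case would not match.
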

\begin {proof}
    Induct on $n$. If $n=1$, then $\mathcal{G}$ is a straight vertical column of $a_1-1$ boxes.
    It is well-known (and easy to check) that the number of $P$-partitions of a $p$-element chain with parts at most $k$ is equal to $\mbinom{p+1}{k} = \binom{p+k}{k}$.
    Clearly the number of $P$-partitions whose image is in the range $\{r,r+1,\dots,s\}$ is the same as if the image were $\{0,1,\dots,s-r\}$.
    The $P$-partitions we want to consider have image in the range 
    $\{j-1,\dots,m+1-i\}$, which is the same as those with image in the range $\{0,1,\dots,m+2-i-j\}$, giving $\mbinom{a_1}{m+2-i-j}$ in total.
    This is indeed the $(i,j)$-entry of the matrix $\Lambda_m(a_1)$.

    Now suppose the result is true for $X' = \Lambda_m(a_1) \cdots \Lambda_m(a_{n-1})$. We need to check that the matrix entries of $X = X' \Lambda_m(a_n)$
    give the correct counts. The entries of $X$ are given by
    \[ X_{ij} = \sum_{k=1}^{m+1} X'_{ik} \Lambda_m(a_n)_{kj} = \sum_{k=1}^{m+1} X'_{ik} \mbinom{a_n}{m+2-k-j} \]

    Suppose that $n$ is even (so $n-1$ is odd). 
    Then by induction, $X'_{ik} = \overline{\Omega}^{i,k}_m(\mathcal{G}')$. The case where $n$ is odd is similar, and will be omitted. The shape
    $\mathcal{G}$ is obtained from $\mathcal{G}'$ by adding one more box above the last box of $\mathcal{G}'$ 
    and then adding a horizontal segment of $a_n-1$ boxes going to the right, for a total of $a_n$ new boxes. 
    Consider this new ``corner box'' where the new horizontal segment joins to the old diagram, and suppose a $P$-partition assigns
    $k-1$ to this box. Then all boxes beneath and to the right must be at least $k-1$. Therefore the set of all such $P$-partitions is the product
    $\overline{\Omega}_m^{i,k}(\mathcal{G}') \times S$, where $S$ is the set of $P$-partitions of the horizontal segment with all parts in the range $\{k-1,\dots,m+1-j\}$.
    By the argument from the previous paragraph, $|S| = \mbinom{a_n}{m+2-k-j}$, and so we have 
    $\left| \overline{\Omega}_m^{i,k}(\mathcal{G}') \times S \right| = X'_{ik} \mbinom{a_n}{m+2-k-j}$.
    Summing over all possible $k$ gives the result.
\end {proof}
\begin {example} \label{ex:m=2_matrix_product}
    Consider $\mathcal{G}[2,2]$. For $m=2$, we have
    \[ X = R^2 L^2 = \Lambda(2)^2 = \begin{pmatrix} 14 & 8 & 3 \\ 8 & 5 & 2 \\ 3 & 2 & 1 \end{pmatrix} \]
    The upper-left entry tells us that there are $14$ $P$-partitions of shape $\mathcal{G}[2,2]$ with parts at most 2. These $14$ $P$-partitions are pictured in Figure \ref{fig:poset}.
    One can also see that the other matrix entries count the appropriate subsets. For example, the $(1,2)$-entry of the matrix is $8$,
    corresponding to the 8 $P$-partitions where the label in the top-right box is at most $1$.

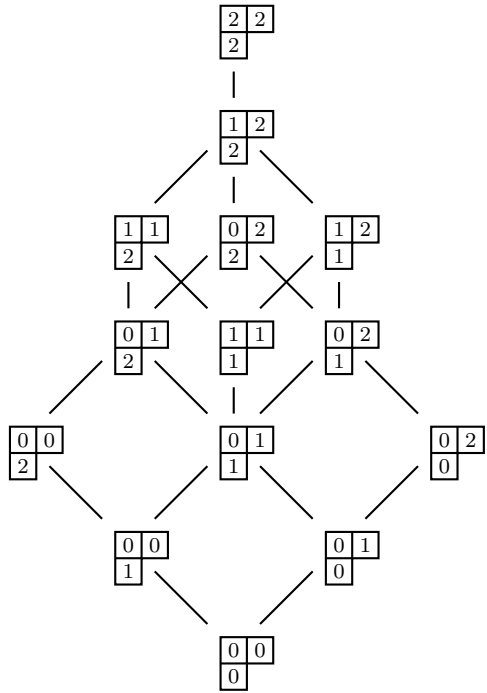
\begin {figure}[h]
\centering
\caption {The poset of $P$-partitions of $\mathcal{G}[2,2]$ with parts at most $2$.}
\label {fig:poset}
\begin {tikzpicture}[scale=0.35]
    \newcommand{\drawsnake}{
        \draw (0,0) -- (0,1) -- (3,1) -- (3,0) -- cycle;
        \draw (1,0) -- (1,1);
        \draw (2,0) -- (2,1);
    }

    \newcommand{\drawdualsnake}{
        \draw (0,0) -- (0,2) -- (2,2) -- (2,1) -- (1,1) -- (1,0) -- cycle;
        \draw (0,1) -- (1,1) -- (1,2);
    }

    \newcommand{\drawposet}{
        \begin {scope}[shift={(-0.5,0)}]
        \draw[fill=black] (0,0) circle (0.05);
        \draw[fill=black] (1,1) circle (0.05);
        \draw[fill=black] (2,0) circle (0.05);
        \draw[fill=black] (0,1) circle (0.05);
        \draw[fill=black] (1,2) circle (0.05);
        \draw[fill=black] (2,1) circle (0.05);

        \draw (0,0) -- (1,1) -- (2,0);
        \draw (0,1) -- (1,2) -- (2,1);
        \draw (0,0) -- (0,1);
        \draw (1,1) -- (1,2);
        \draw (2,0) -- (2,1);
        \end {scope}
    }


    \drawdualsnake

    \draw (0.5,0.5) node {\tiny $0$};
    \draw (0.5,1.5) node {\tiny $0$};
    \draw (1.5,1.5) node {\tiny $0$};

    \draw (-0.5,2.5) -- (-2.5,4.5);
    \draw (1.5,2.5) -- (3.5,4.5);


    \begin {scope}[shift={(-4,4)}]
        \drawdualsnake

        \draw (0.5,0.5) node {\tiny $1$};
        \draw (0.5,1.5) node {\tiny $0$};
        \draw (1.5,1.5) node {\tiny $0$};

        \draw (-0.5,2.5) -- (-2.5,4.5);
        \draw (1.5,2.5) -- (3.5,4.5);
    \end {scope}

    \begin {scope}[shift={(4,4)}]
        \drawdualsnake

        \draw (0.5,0.5) node {\tiny $0$};
        \draw (0.5,1.5) node {\tiny $0$};
        \draw (1.5,1.5) node {\tiny $1$};

        \draw (-0.5,2.5) -- (-2.5,4.5);
        \draw (1.5,2.5) -- (3.5,4.5);
    \end {scope}


    \begin {scope}[shift={(-8,8)}]
        \drawdualsnake

        \draw (0.5,0.5) node {\tiny $2$};
        \draw (0.5,1.5) node {\tiny $0$};
        \draw (1.5,1.5) node {\tiny $0$};

        \draw (1.5,2.5) -- (3.5,4.5);
    \end {scope}

    \begin {scope}[shift={(0,8)}]
        \drawdualsnake

        \draw (0.5,0.5) node {\tiny $1$};
        \draw (0.5,1.5) node {\tiny $0$};
        \draw (1.5,1.5) node {\tiny $1$};

        \draw (0.5,2.5) -- (0.5,3.5);
        \draw (-0.5,2.5) -- (-2.5,4.5);
        \draw (1.5,2.5) -- (3.5,4.5);
    \end {scope}

    \begin {scope}[shift={(8,8)}]
        \drawdualsnake

        \draw (0.5,0.5) node {\tiny $0$};
        \draw (0.5,1.5) node {\tiny $0$};
        \draw (1.5,1.5) node {\tiny $2$};

        \draw (-0.5,2.5) -- (-2.5,4.5);
    \end {scope}


    \begin {scope}[shift={(-4,12)}]
        \drawdualsnake

        \draw (0.5,0.5) node {\tiny $2$};
        \draw (0.5,1.5) node {\tiny $0$};
        \draw (1.5,1.5) node {\tiny $1$};

        \draw (0.5,2.5) -- (0.5,3.5);
        \draw (1.5,2.5) -- (3.5,4.5);
    \end {scope}

    \begin {scope}[shift={(0,12)}]
        \drawdualsnake

        \draw (0.5,0.5) node {\tiny $1$};
        \draw (0.5,1.5) node {\tiny $1$};
        \draw (1.5,1.5) node {\tiny $1$};

        \draw (-0.5,2.5) -- (-2.5,4.5);
        \draw (1.5,2.5) -- (3.5,4.5);
    \end {scope}

    \begin {scope}[shift={(4,12)}]
        \drawdualsnake

        \draw (0.5,0.5) node {\tiny $1$};
        \draw (0.5,1.5) node {\tiny $0$};
        \draw (1.5,1.5) node {\tiny $2$};

        \draw (-0.5,2.5) -- (-2.5,4.5);
        \draw (0.5,2.5) -- (0.5,3.5);
    \end {scope}


    \begin {scope}[shift={(-4,16)}]
        \drawdualsnake

        \draw (0.5,0.5) node {\tiny $2$};
        \draw (0.5,1.5) node {\tiny $1$};
        \draw (1.5,1.5) node {\tiny $1$};

        \draw (1.5,2.5) -- (3.5,4.5);
    \end {scope}

    \begin {scope}[shift={(0,16)}]
        \drawdualsnake

        \draw (0.5,0.5) node {\tiny $2$};
        \draw (0.5,1.5) node {\tiny $0$};
        \draw (1.5,1.5) node {\tiny $2$};

        \draw (0.5,2.5) -- (0.5,3.5);
    \end {scope}

    \begin {scope}[shift={(4,16)}]
        \drawdualsnake

        \draw (0.5,0.5) node {\tiny $1$};
        \draw (0.5,1.5) node {\tiny $1$};
        \draw (1.5,1.5) node {\tiny $2$};

        \draw (-0.5,2.5) -- (-2.5,4.5);
    \end {scope}


    \begin {scope}[shift={(0,20)}]
        \drawdualsnake

        \draw (0.5,0.5) node {\tiny $2$};
        \draw (0.5,1.5) node {\tiny $1$};
        \draw (1.5,1.5) node {\tiny $2$};

        \draw (0.5,2.5) -- (0.5,3.5);
    \end {scope}


    \begin {scope}[shift={(0,24)}]
        \drawdualsnake

        \draw (0.5,0.5) node {\tiny $2$};
        \draw (0.5,1.5) node {\tiny $2$};
        \draw (1.5,1.5) node {\tiny $2$};
    \end {scope}
\end {tikzpicture}
\end {figure}
\end {example}

\smallskip

\begin {remark}
    There is a simple bijection between $\Omega_m(\mathcal{G})$ and the set of \emph{$m$-lattice paths}, which are $m$-tuples of north-east lattice paths
    on $\mathcal{G}$ (up to equivalence, where two tuples of paths are identified if they have the same multiset of edges). The labels in the $P$-partition
    are the number of paths going above that box. 

    There is a bijection between north-east lattice paths on $\mathcal{G}$ and perfect matchings (also called \emph{dimer covers}) of a different snake graph
    $\mathcal{G}^*$ (called the \emph{dual snake graph}), as explained in \cite{propp} and \cite{claussen}. This bijection extends naturally to a bijection between
    $m$-lattice paths and $m$-dimer covers ($m$-tuples of perfect matchings, up to equivalence).

    The $m=1$ dimer covers of snake graphs were used in \cite{ms_10,msw_11} to compute cluster variables as well as their $F$-polynomials in cluster algebras of surface type. The $m=2$ double dimer covers of snake graphs were used in \cite{musiker2021expansion,musiker2022double} to compute cluster variables in certain ``supersymmetric'' cluster algebras.
    Through the above bijections, these results can be reformulated in terms of $P$-partitions of skew Young diagrams. 
\end {remark}

\begin{example}\label{ex kronecker}

We illustrate this correspondence in the example of cluster algebra of the Kronecker quiver  $\xymatrix{1\ar@<2pt>[r]\ar@<-2pt>[r] &2}$. The corresponding surface is an annulus with one marked point on each boundary component. 
In the associated cluster algebra, consider the cluster variable obtained by the sequence of mutations at 2,1,2. Its snake graph $\calG^*[2,1,1,2]$ (as defined in \cite{cs_18}) with face labels corresponding to principal coefficients is shown in the top left picture in Figure~\ref{fig Fpoly}.  The corresponding skew Young diagram $\calG[2,1,1,2]$ is shown in the top right picture of the same figure. The second and the third row illustrate the bijection between 1-dimer covers of $\calG^*[2,1,1,2]$ and $P$-partitions of $\calG[2,1,1,2]$  in two examples. The left picture in both rows shows a dimer cover of $\calG^*[2,1,1,2]$, 
and the right picture shows the corresponding $P$-partition.
  
The dimer cover in the second row is the minimal perfect matching and its contribution to the $F$-polynomial is 1. 
The dimer cover in the third row is obtained from the minimal matching by twisting  the first, third, fourth and fifth tile. 
The corresponding tile labels are 2,2,2,1. The matching therefore contributes the monomial $y_1 y_2^3$ to the $F$-polynomial. The monomials can also be recovered from 
the right picture by considering the labels in the tiles as the exponents of the monomial.

\end{example}

 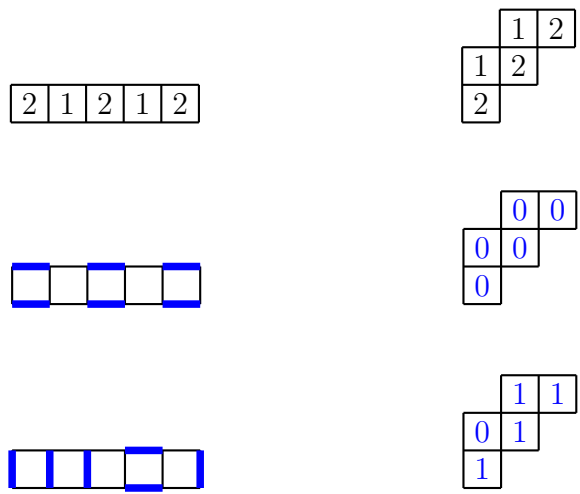
\begin{figure}
\begin{center}

\[\begin {tikzpicture}[scale=0.5]
    \draw (0,0) -- (5,0);
    \draw (0,1) -- (5,1);
    \draw (0,0) -- (0,1);
    \draw (1,0) -- (1,1);
    \draw (2,0) -- (2,1);
    \draw (3,0) -- (3,1);
    \draw (4,0) -- (4,1);
    \draw (5,0) -- (5,1);
    \node at (0.5,0.5) {2};
    \node at (1.5,0.5) {1};
    \node at (2.5,0.5) {2};
    \node at (3.5,0.5) {1};
    \node at (4.5,0.5) {2};
  
   \draw (12,0) -- (13,0);
   \draw (12,0) -- (12,2);
   \draw (13,0) -- (13,3);
   \draw (14,1) -- (14,3);
   \draw (15,2) -- (15,3);
   \draw (12,1) -- (14,1);
   \draw (12,2) -- (15,2);
   \draw (13,3) -- (15,3);
   
    \node at (12.5,0.5) {2};
    \node at (12.5,1.5) {1};
    \node at (13.5,1.5) {2};
    \node at (13.5,2.5) {1};
    \node at (14.5,2.5) {2};

     \end {tikzpicture}
    \]
    
    \[\begin {tikzpicture}[scale=0.5]
    \draw (0,0) -- (5,0);
    \draw (0,1) -- (5,1);
    \draw (0,0) -- (0,1);
    \draw (1,0) -- (1,1);
    \draw (2,0) -- (2,1);
    \draw (3,0) -- (3,1);
    \draw (4,0) -- (4,1);
    \draw (5,0) -- (5,1);
    
     \draw [line width=1mm,  blue] (0,0) -- (1,0) ;
     \draw [line width=1mm,  blue] (0,1) -- (1,1) ;
     \draw [line width=1mm,  blue] (2,0) -- (3,0) ;
     \draw [line width=1mm,  blue] (4,0) -- (5,0) ;
     \draw [line width=1mm,  blue] (2,1) -- (3,1) ;
     \draw [line width=1mm,  blue] (4,1) -- (5,1) ;

%
%

   \draw (12,0) -- (13,0);
   \draw (12,0) -- (12,2);
   \draw (13,0) -- (13,3);
   \draw (14,1) -- (14,3);
   \draw (15,2) -- (15,3);
   \draw (12,1) -- (14,1);
   \draw (12,2) -- (15,2);
   \draw (13,3) -- (15,3);

   \node[blue] at (12.5,0.5) {0}; 
\node[blue]  at (12.5,1.5) {0}; 
\node[blue]  at (13.5,1.5) {0}; 
\node[blue]  at (13.5,2.5) {0}; 
\node[blue]  at (14.5,2.5) {0};

     \end {tikzpicture}
\]

    \[\begin {tikzpicture}[scale=0.5]
    \draw (0,0) -- (5,0);
    \draw (0,1) -- (5,1);
    \draw (0,0) -- (0,1);
    \draw (1,0) -- (1,1);
    \draw (2,0) -- (2,1);
    \draw (3,0) -- (3,1);
    \draw (4,0) -- (4,1);
    \draw (5,0) -- (5,1);
    
     \draw [line width=1mm,  blue] (0,0) -- (0,1) ;
     \draw [line width=1mm,  blue] (1,0) -- (1,1) ;
     \draw [line width=1mm,  blue] (2,0) -- (2,1) ;
     \draw [line width=1mm,  blue] (4,0) -- (3,0) ;
     \draw [line width=1mm,  blue] (4,1) -- (3,1) ;
     \draw [line width=1mm,  blue] (5,0) -- (5,1) ;

%


   \draw (12,0) -- (13,0);
   \draw (12,0) -- (12,2);
   \draw (13,0) -- (13,3);
   \draw (14,1) -- (14,3);
   \draw (15,2) -- (15,3);
   \draw (12,1) -- (14,1);
   \draw (12,2) -- (15,2);
   \draw (13,3) -- (15,3);

   \node[blue]  at (12.5,0.5) {1}; 
\node[blue]  at (12.5,1.5) {0}; 
\node[blue]  at (13.5,1.5) {1}; 
\node[blue]  at (13.5,2.5) {1}; 
\node[blue]  at (14.5,2.5) {1};

     \end {tikzpicture}
\]

\caption{Illustration of the bijection in  Example \ref{ex kronecker}. 
}
\label{fig Fpoly}
\end{center}
\end{figure}

\bigskip

\section {Refined Enumeration and $q$-Analogs} \label{sec:q_enumeration}

One can see in Figure \ref{fig:poset} that there is a natural partial order to the set of $P$-partitions. The covering relations
are given by incrementing a single entry by 1. So rather than just counting $P$-partitions,
one could also ``$q$-count'' them. That is, we could consider the rank generating function of this poset of $P$-partitions with parts at most $m$. 
In the example, this is $1 + 2q + 3q^2 + 3q^3 + 3q^4 + q^5 + q^6$.

These generating functions are also the principal specializations of certain chromatic quasi-symmetric functions \cite{shareshian2016chromatic}
associated to the fence posets.

The main goal of this section is to give a $q$-analog of Theorem \ref{thm:matrix_formula}. To do so, we will need to define $q$-analogs
of the $R$, $L$, and $\Lambda$ matrices, and find explicit expressions for products of them.

We will use the standard notations $[n]_q = 1+q+q^2+\cdots+q^{n-1}$, $[n]_q! = [n]_q [n-1]_q \cdots [1]_q$, and $\binom{n}{k}_q = \frac{[n]_q!}{[k]_q![n-k]_q!}$.
We will also use the multichoose notation $\mbinom{n}{k}_q := \binom{n+k-1}{k}_q$.

\bigskip

Let $Q_m = \mathrm{diag}(q^m,q^{m-1},\dots,q,1)$ be the diagonal matrix with powers of $q$ on the diagonal. Define the $q$-deformations
of the $R$ and $L$ matrices by
\[ R_m(q) := R_m Q_m, \quad \quad L_m(q) := L_m Q_m \]
For example, when $m=2$, we have
\[ R_2(q) = \begin{pmatrix} q^2 & q & 1 \\ 0 & q & 1 \\ 0 & 0 & 1 \end{pmatrix}, \quad \quad L_2(q) = \begin{pmatrix} q^2 & 0 & 0 \\ q^2 & q & 0 \\ q^2 & q & 1 \end{pmatrix} \]

It will be useful to have formulas for powers of the $R_m(q)$ and $L_m(q)$ matrices. But first some basic remarks. 
A well-known identity of binomial coefficients (sometimes called \emph{Fermat's identity} or the \emph{hockey stick identity})
says that $\sum_{i=k}^n \binom{i}{k} = \binom{n+1}{k+1}$, which in multichoose notation becomes $\sum_{\ell=0}^k \mbinom{n}{\ell} = \mbinom{n+1}{k}$. 
There are the following $q$-analogs, which are easily derived from the two $q$-versions of Pascal's identity.

\begin {lemma} \label{lem:fermat}
    For integers $n$ and $k$,
    \begin {enumerate}
        \item[(a)] $\displaystyle \mbinom{n}{k}_q = \sum_{\ell=0}^k q^\ell \mbinom{n-1}{\ell}_q$
        \item[(b)] $\displaystyle \mbinom{n}{k}_q = \sum_{\ell=0}^k q^{(k-\ell)(n-1)} \mbinom{n-1}{\ell}_q$
    \end {enumerate}
\end {lemma}

The following is a $q$-analog of  \cite[Lemma 4.4]{mosz_23}. It
gives an expression for the matrix entries of powers of $R_m(q)$.

\begin {lemma} \label{lem:powers_of_R}
    Let $j=i+k$ with $k \geq 0$. The $(i,j)$-entry of $R_m(q)^a$ is given by
    \[ \left( R_m(q)^a \right)_{ij} = \left( R_m(q)^a \right)_{i,i+k} = q^{a(m+1-i-k)} \mbinom{a}{k}_q = q^{a(m+1-j)} \binom{a+j-i-1}{j-i}_q \]
\end {lemma}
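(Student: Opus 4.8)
The plan is to proceed by induction on $a$, using the relation $R_m(q)^{a} = R_m(q)^{a-1} \cdot R_m(q)$ and expanding the matrix product entrywise. The base case $a=1$ is immediate: $R_m(q) = R_m Q_m$ has $(i,j)$-entry equal to $q^{m+1-j}$ when $i \leq j$ and $0$ otherwise, which matches $q^{1\cdot(m+1-j)}\mbinom{1}{j-i}_q$ since $\mbinom{1}{k}_q = 1$ for all $k \geq 0$. (It may also be convenient to record the $a=0$ case, where $R_m(q)^0 = \mathrm{Id}$ matches the formula with $\mbinom{0}{0}_q = 1$ and $\mbinom{0}{k}_q = 0$ for $k > 0$.)

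For the inductive step, I would write, for $j = i+k$,
\[
\left(R_m(q)^a\right)_{ij} = \sum_{\ell} \left(R_m(q)^{a-1}\right)_{i,\ell}\,\left(R_m(q)\right)_{\ell,j},
\]
where the sum runs over $i \leq \ell \leq j$. Substituting the inductive hypothesis for the first factor (writing $\ell = i + t$ with $0 \leq t \leq k$) and the explicit value $q^{m+1-j}$ for the second gives
\[
\left(R_m(q)^a\right)_{ij} = \sum_{t=0}^{k} q^{(a-1)(m+1-i-t)}\mbinom{a-1}{t}_q \cdot q^{m+1-j}.
\]
Pulling out the factor $q^{(a-1)(m+1-i)} \cdot q^{m+1-j} = q^{(a-1)(m+1-i)+(m+1-i-k)}$ (using $j = i+k$) reduces the sum to $q^{(a-1)(m+1-i)+(m+1-i-k)}\sum_{t=0}^{k} q^{-(a-1)t}\mbinom{a-1}{t}_q$. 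This is not quite in the shape of Lemma 4.4; the cleanest route is instead to keep the exponent as $q^{(a-1)(m+1-i-t)}$ and note $(a-1)(m+1-i-t) = (a-1)(m+1-i-k) + (a-1)(k-t)$, so that
\[
\left(R_m(q)^a\right)_{ij} = q^{(a-1)(m+1-i-k)} q^{m+1-j}\sum_{t=0}^{k} q^{(k-t)(a-1)}\mbinom{a-1}{t}_q = q^{a(m+1-i-k)}\sum_{t=0}^k q^{(k-t)(a-1)}\mbinom{a-1}{t}_q,
\]
using $(a-1)(m+1-i-k) + (m+1-j) = (a-1)(m+1-i-k)+(m+1-i-k) = a(m+1-i-k)$. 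Now the inner sum is exactly part (b) of Lemma 4.4 with $n = a$, giving $\mbinom{a}{k}_q$, and we obtain $\left(R_m(q)^a\right)_{ij} = q^{a(m+1-i-k)}\mbinom{a}{k}_q = q^{a(m+1-j)}\mbinom{a}{k}_q$, which is the claimed formula (the final rewriting $\mbinom{a}{k}_q = \binom{a+k-1}{k}_q$ with $k = j-i$ is just the definition of the $q$-multichoose symbol).

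The main obstacle is purely bookkeeping: matching the exponent of $q$ coming out of the product to the form required by Lemma 4.4(b), since there are two natural ways to factor the exponent $(a-1)(m+1-i-t)$ and only one of them lines up with the stated $q$-Fermat identity. I would be careful to verify the index ranges (that $\ell$ is forced into $[i,j]$ because $R_m(q)$ is upper triangular and $R_m(q)^{a-1}$ vanishes for column index less than row index), and to double-check the arithmetic identity $(a-1)(m+1-i-k)+(m+1-j) = a(m+1-i-k)$ using $j-i=k$. An entirely parallel computation (or an application of the transpose/antidiagonal symmetry $W_m R_m(q)^a W_m$ relating $R$ to $L$) handles powers of $L_m(q)$, which will be needed later; using part (a) rather than part (b) of Lemma 4.4 yields the corresponding lower-triangular formula.
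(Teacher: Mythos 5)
Your proof is correct and follows essentially the same route as the paper: induction on $a$, peeling off a single factor of $R_m(q)$ and closing the resulting sum with the $q$-hockey-stick identity of Lemma~\ref{lem:fermat}. The only (immaterial) difference is that you factor $R_m(q)^a = R_m(q)^{a-1}\cdot R_m(q)$ rather than $R_m(q)\cdot R_m(q)^{a-1}$, which is why your exponent bookkeeping lands on part (b) of that lemma instead of part (a); the paper's choice makes the exponent manipulation a little shorter, but both are valid.
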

\begin {proof}
    When $a=1$ it is clearly true, since $\mbinom{1}{k}_q = \binom{k}{k}_q = 1$, and $R_m(q)$ has all upper-triangular entries equal to $q^{m+1-j}$.
    In general, writing $R_m(q)^a = R_m(q) \cdot R_m(q)^{a-1}$ and using induction, we get
    \begin {align*}
        (R_m(q)^a)_{ij} &= (R_m(q) \cdot R_m(q)^{a-1})_{ij} \\
                     &= \sum_\ell R_m(q)_{i\ell} (R_m(q)^{a-1})_{\ell j} \\
                     &= \sum_{\ell = i}^j q^{m+1-\ell} q^{(a-1)(m+1-j)} \mbinom{a-1}{j-\ell}_q \\
                     &= \sum_{\ell = i}^j q^{a(m+1-j)} q^{j-\ell} \mbinom{a-1}{j-\ell}_q \\
                     &= q^{a(m+1-j)} \sum_{\ell = 0}^{j-i}  q^{\ell} \mbinom{a-1}{\ell}_q \\
    \end {align*}
    By Lemma \ref{lem:fermat}(a), the sum is equal to $\mbinom{a}{j-i}_q = \mbinom{a}{k}_q$.
\end {proof}

\bigskip

\begin {remark} \label{rmk:powers_of_L}
    Note that by definition $R_m(q) = R_m Q_m$ and $L_m(q) = L_m Q_m$. Since $L_m = R_m^\top$, we have $L_m(q) = Q_m^{-1} R_m(q)^\top Q_m$, 
    and hence $L_m(q)^a = Q_m^{-1} (R_m(q)^a)^\top Q_m$.
    One can use this relation to get the corresponding formula for the entries of $L_m(q)^a$:
    \[ (L_m(q)^a)_{ij} = q^{i-j} (R_m(q)^a)_{ji} \]
\end {remark}

\bigskip

\begin {example}
    When $m=1$, these are the quantized generators of $\mathrm{PSL}_2(\Bbb{Z})$ used in \cite{mgo_20}, and we have
    \[ R_1(q)^a = \begin{pmatrix} q^a & [a]_q \\ 0 & 1 \end{pmatrix}, \quad L_1(q)^a = \begin{pmatrix} q^a & 0 \\ q[a]_q & 1 \end{pmatrix} \]
    For $m=2$, we have
    \[
        R_2(q)^a = \begin{pmatrix}
            q^{2a} & q^a[a]_q & \mbinom{a}{2}_q \\
            0      & q^a      & [a]_q \\
            0      & 0        & 1 
        \end{pmatrix}, \quad
        L_2(q)^a = \begin{pmatrix}
            q^{2a}             & 0      & 0 \\
            q^{a+1}[a]_q       & q^a    & 0 \\
            q^2\mbinom{a}{2}_q & q[a]_q & 1 
        \end{pmatrix}
    \]
    For $m=3$, we have
    \[
        R_3(q)^a = \begin{pmatrix}
            q^{3a} & q^{2a}[a]_q & q^a\mbinom{a}{2}_q & \mbinom{a}{3}_q \\
            0      & q^{2a}      & q^a[a]_q           & \mbinom{a}{2}_q \\
            0      & 0           & q^a                & [a]_q \\
            0      & 0           & 0                  & 1
        \end{pmatrix}, \quad
        L_3(q)^a = \begin{pmatrix}
            q^{3a}                 & 0                  & 0      & 0 \\
            q^{2a+1}[a]_q          & q^{2a}             & 0      & 0 \\
            q^{a+2}\mbinom{a}{2}_q & q^{a+1}[a]_q       & q^a    & 0 \\
            q^3\mbinom{a}{3}_q     & q^2\mbinom{a}{2}_q & q[a]_q & 1
        \end{pmatrix}
    \]
\end {example}

\bigskip

Now we define the $q$-analogs of the $\Lambda$-matrices. 
Recall that $W_m$ is the $(m+1)\times(m+1)$ anti-diagonal matrix with $1$'s on the anti-diagonal.

\bigskip

\begin {definition}
    For a positive integer $a$, let $\Lambda^+_m(q,a) := R_m(q)^a W_m$, and $\Lambda^-_m(q,a) := W_m L_m(q)^a$
\end {definition}

Thus $\Lambda^+_m(q,a)$ is obtained from $R_m(q)^a$ by reversing the order of the columns and 
$\Lambda^-_m(q,a)$ is obtained from $L_m(q)^a$ by reversing the order of the rows.
\bigskip

\begin {example}
    We obtain $\Lambda^+_3(q,a)$ and $\Lambda^-_3(q,a)$ from the example above by permuting the rows and columns:
    \[
        \Lambda^+_3(q,a) = \begin{pmatrix}
            \mbinom{a}{3}_q & q^a\mbinom{a}{2}_q & q^{2a}[a]_q & q^{3a} \\
            \mbinom{a}{2}_q & q^a[a]_q           & q^{2a}      & 0      \\
            [a]_q           & q^a                & 0           & 0      \\
            1               & 0                  & 0           & 0      
        \end{pmatrix}, \quad
        \Lambda^-_3(q,a) = \begin{pmatrix}
            q^3\mbinom{a}{3}_q     & q^2\mbinom{a}{2}_q & q[a]_q & 1 \\
            q^{a+2}\mbinom{a}{2}_q & q^{a+1}[a]_q       & q^a    & 0 \\
            q^{2a+1}[a]_q          & q^{2a}             & 0      & 0 \\
            q^{3a}                 & 0                  & 0      & 0 
        \end{pmatrix}
    \]
\end {example}

\bigskip

\begin {remark} \label{rmk:lambda_entries}
    By replacing $j \mapsto m+2-j$ in the expressions from Lemma \ref{lem:powers_of_R}, one obtains formulas for the entries of $\Lambda^+_m(q,a)$:
    \[ \left( \Lambda^+_m(q,a) \right)_{ij} = q^{(j-1)a} \mbinom{a}{m+2-i-j}_q \]
    Similarly, replacing $i \mapsto m+2-i$ in the expression from Remark \ref{rmk:powers_of_L} gives formulas for $\Lambda^-_m(q,a)$:
    \[ \left( \Lambda^-_m(q,a) \right)_{ij} = q^{m+2-i-j} q^{(i-1)a} \mbinom{a}{m+2-i-j}_q \]
\end {remark}

\bigskip

\begin {remark}
    Since $W_m^2 = \mathrm{Id}$, we have $R_m(q)^a L_m(q)^b = \Lambda^+(q,a)\Lambda^-(q,b)$. It follows that for any even-length sequence $a_1,a_2,\dots,a_{2n}$, we have
    \[ R_m(q)^{a_1}L_m(q)^{a_2} \cdots R_m(q)^{a_{2n-1}} L_m(q)^{a_{2n}} = \Lambda^+(q,a_1)\Lambda^-(q,a_2) \cdots \Lambda^+(q,a_{2n-1}) \Lambda^-(q,a_{2n}) \]
    and for an odd-length sequence $a_1,\dots,a_{2n+1}$, we have
    \[ R_m(q)^{a_1}L_m(q)^{a_2} \cdots L_m(q)^{a_{2n}} R_m(q)^{a_{2n+1}} W_m = \Lambda^+(q,a_1)\Lambda^-(q,a_2) \cdots \Lambda^+(q,a_{2n+1}) \]
\end {remark}

\bigskip

\begin {remark}
    We can also express the product using just $\Lambda^+$ (and not $\Lambda^-$). Since $\binom{n}{k}_q = q^{k(n-k)}\binom{n}{k}_{q^{-1}}$, one
    can show that $\Lambda^-_m(q,a) = q^{am} \Lambda^+_m(q^{-1},a)$. Therefore the even-length $RL$-product is also equal to
    \[ q^{m(a_2+a_4+\cdots+a_{2n})} \Lambda^+_m(q,a_1) \Lambda^+_m(q^{-1},a_2) \cdots \Lambda^+_m(q,a_{2n-1}) \Lambda^+_m(q^{-1},a_{2n}) \]
    In the case $m=1$, this is the form that was used in \cite{mgo_20}.
\end {remark}

\begin {definition}
    For a border strip $\mathcal{G} = \mathcal{G}[a_1,a_2,\dots,a_n]$, let $X_{\mathcal{G}}(q)$ be the following matrix product:
    \[ X_{\mathcal{G}}(q) = \Lambda_m^+(q,a_1)\Lambda_m^-(q,a_2) \Lambda_m^+(q,a_3) \Lambda_m^-(q,a_4) \cdots \Lambda_m^\pm(q,a_n), \]
    where the last $\pm$ depends on the parity of $n$.
\end {definition}

We are now almost ready to give the main theorem of this section, which is the $q$-analog of Theorem \ref{thm:matrix_formula}. 
To state the theorem, we must define the proper generating functions for the sets $\Omega_m^{ij}(\mathcal{G})$ and $\overline{\Omega}_m^{ij}(\mathcal{G})$.
If $\sigma \in \Omega_m(\mathcal{G})$ is a reverse plane partition (thought of as a map $\sigma \colon \mathcal{G} \to \{0,1,2,\dots,m\}$\footnote{The domain 
of this map is really the set of square faces of the graph $\mathcal{G}$.}), then its \emph{weight}
is simply the sum of its values. That is, $\mathrm{wt}(\sigma) = \sum_{x \in \mathcal{G}} \sigma(x)$. We then define $\Omega_m^{ij}(\mathcal{G},q)$ to be the generating
function for the subset $\Omega_m^{ij}(\mathcal{G})$:
\[ \Omega_m^{ij}(\mathcal{G},q) = \sum_{\sigma \in \Omega_m^{ij}(\mathcal{G})} q^{\mathrm{wt}(\sigma)} \]
and similarly for $\overline{\Omega}_m^{ij}(\mathcal{G},q)$.

\begin {theorem} \label{thm:q_matrix_formula}
    Let $\mathcal{G} = \mathcal{G}[a_1,\dots,a_n]$ be a border strip, and $X_\mathcal{G}(q)$ be the corresponding matrix product. Then
    \[
        X_\mathcal{G}(q)_{ij} = \begin{cases}
            q^{m+1-j} \, \Omega_m^{ij}(\mathcal{G},q) & \text{ if $n$ is even} \\[1ex]
            q^{j-1} \, \overline{\Omega}_m^{ij}(\mathcal{G},q) & \text { if $n$ is odd}
        \end{cases}
    \]
\end {theorem}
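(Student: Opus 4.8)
The plan is to prove this by induction on $n$, closely following the proof of Theorem~\ref{thm:matrix_formula} but carrying the weight statistic along. The single conceptual point beyond the $q=1$ argument is that the weight of a $P$-partition is \emph{additive} under the decomposition of a border strip $\mathcal{G}$ into a shorter border strip $\mathcal{G}'$ and a newly attached chain of boxes, so that the Cartesian products of sets appearing in that proof become products of generating functions; everything else is bookkeeping of monomial prefactors, which one must carry out separately in the even and odd cases.

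For the base case $n=1$, the shape $\mathcal{G}[a_1]$ is the chain of $a_1-1$ boxes (as in the proof of Theorem~\ref{thm:matrix_formula}), and I would use the $q$-enumeration of $P$-partitions of a chain: the generating function, by sum of entries, for weakly monotone sequences of length $p$ with all values in a block $\{r,r+1,\dots,s\}$ equals $q^{pr}\binom{p+s-r}{s-r}_q = q^{pr}\mbinom{p+1}{s-r}_q$ — the $q$-analog of the chain count used in the proof of Theorem~\ref{thm:matrix_formula}, obtained from the enumeration of partitions inside a box together with a uniform shift of all $p$ entries by $r$. The set $\overline{\Omega}^{ij}_m(\mathcal{G}[a_1])$ consists precisely of such sequences with all values confined to a single block determined by $i$ and $j$; comparing its generating function with the closed form $(\Lambda^+_m(q,a_1))_{ij} = q^{(j-1)a_1}\mbinom{a_1}{m+2-i-j}_q$ from Remark~\ref{rmk:lambda_entries} and separating off the factor $q^{(j-1)(a_1-1)}$ coming from the shift leaves exactly the asserted prefactor $q^{j-1}$.

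For the inductive step, write $X_\mathcal{G}(q) = X_{\mathcal{G}'}(q)\,\Lambda^{\pm}_m(q,a_n)$ with $\mathcal{G}' = \mathcal{G}[a_1,\dots,a_{n-1}]$, so that $X_\mathcal{G}(q)_{ij} = \sum_k X_{\mathcal{G}'}(q)_{ik}\,\Lambda^{\pm}_m(q,a_n)_{kj}$. Geometrically $\mathcal{G}$ is built from $\mathcal{G}'$ by first completing the last maximal run of $\mathcal{G}'$ with one new box — the ``corner box'' $c$ — and then attaching a chain of $a_n-1$ further boxes (running to the right if $n$ is even, upward if $n$ is odd). I would stratify the $P$-partitions of $\mathcal{G}$ by the value $v = \sigma(c)$. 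The restriction $\sigma|_{\mathcal{G}'}$ then lies in $\overline{\Omega}^{i,\ell}_m(\mathcal{G}')$ (when $n$ is even, so $n-1$ is odd) or in $\Omega^{i,\ell}_m(\mathcal{G}')$ (when $n$ is odd), for the index $\ell$ forced by $v$; and $\sigma$ restricted to the newly attached $(a_n-1)$-chain is an arbitrary $P$-partition of that chain with all entries confined to the block cut out below by $v$ and above by $j$. Since $\wt(\sigma) = \wt(\sigma|_{\mathcal{G}'}) + v + \wt(\sigma|_{\text{new chain}})$, summing $q^{\wt(\sigma)}$ over this stratification expresses $\Omega^{ij}_m(\mathcal{G},q)$ (resp.\ $\overline{\Omega}^{ij}_m(\mathcal{G},q)$) as a sum over $v$ of products; feeding in the base-case chain formula for the new-chain factor, together with the $q^v$ contributed by $c$ and the shift from re-normalizing the block, one matches this factor against $\Lambda^{\pm}_m(q,a_n)_{kj}$ (reindexing $v$ to $k$) using the closed forms of Remark~\ref{rmk:lambda_entries}, and the inductive prefactor at the $\mathcal{G}'$ end then supplies exactly the claimed $q^{m+1-j}$ (even $n$) or $q^{j-1}$ (odd $n$).

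The main obstacle is precisely this prefactor bookkeeping, together with getting the geometry right. One must treat the two parities separately: appending $\Lambda^-_m(q,a_n)$ versus $\Lambda^+_m(q,a_n)$ constrains the opposite end of the new chain (its largest versus its smallest entry), and the entries of $\Lambda^-_m(q,a)$ carry the extra monomial factor $q^{m+2-i-j}$ relative to those of $\Lambda^+_m(q,a)$, which must be absorbed correctly. By contrast, no part of this argument needs the $q$-Fermat identities of Lemma~\ref{lem:fermat}: unlike the derivation of Lemma~\ref{lem:powers_of_R}, here the sum over the corner value $v$ is \emph{literally} the matrix multiplication $X_{\mathcal{G}'}(q)\,\Lambda^{\pm}_m(q,a_n)$, not an identity that must be collapsed. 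Finally, I would note that the whole argument can be recast as a transfer-matrix computation, with state space $\{0,1,\dots,m\}$ recording the value at the junction between two consecutive maximal runs of the border strip and the additive weight recorded multiplicatively; from that perspective the content of the theorem is simply that $\Lambda^{\pm}_m(q,a)$ is the transfer matrix of a single run of length $a$ — which is again the base-case computation.
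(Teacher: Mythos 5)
Your proposal matches the paper's proof essentially step for step: the same induction on $n$, the same base case via the $q$-enumeration of $P$-partitions of a chain with values in a shifted block $\{r,\dots,s\}$ (yielding $q^{pr}\mbinom{p+1}{s-r}_q$), and the same inductive step stratifying by the value of the corner box so that the sum over that value is literally the matrix product $X_{\mathcal{G}'}(q)\,\Lambda^{\pm}_m(q,a_n)$, with the prefactors $q^{m+1-j}$ and $q^{j-1}$ tracked separately by parity. The transfer-matrix framing and the remark that Lemma~\ref{lem:fermat} is not needed here are accurate commentary but do not change the argument.
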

\begin {proof}
    Consider a poset $P$ which is a chain of $p$ elements. It is well-known, and easy to see, that the generating function
    is $\Omega_k(P,q) = \mbinom{p+1}{k}_q$. One way to see this is to use the bijection between plane partitions and lattice paths in a $p \times k$ rectangle,
    and the generating function for lattice paths is given by $q$-binomial coefficients.

    Now consider again the case of a $p$-element chain, but now restrict the entries to be in the range $\{r, r+1, \dots, s\}$. By shifting all entries by $r$,
    there is a clear bijection with $P$-partitions whose parts are in the range $\{0,1,\dots,s-r\}$. However, the weights of these $P$-partitions will differ
    by $pr$. Therefore the generating function will be $q^{pr}\mbinom{p+1}{s-r}_q$.

    For the base case of the theorem, when $n=1$, $\mathcal{G}[a_1]$ is a vertical column of $a_1-1$ boxes. By definition, $\overline{\Omega}_m^{ij}(\mathcal{G})$ is the set of $P$ partitions
    on the $(a_1-1)$-element chain with parts in the range $\{j-1,j,\dots,m+1-i\}$. So using the argument above, and substituting $p=a_1-1$, $r=j-1$, and $s=m+1-i$, we get
    \[ \overline{\Omega}_m^{ij}(\mathcal{G},q) = q^{(a_1-1)(j-1)}\mbinom{a_1}{m+2-i-j}_q \]
    Comparing this with Remark \ref{rmk:lambda_entries}, we see that it differs from $\Lambda_m^+(q,a_1)_{ij}$ by a factor of $q^{j-1}$. This establishes the theorem in the case $n=1$.

    Now we induct on $n$. For now, consider the case when $n$ is even (and $n-1$ is odd). Let $\mathcal{G}' = \mathcal{G}[a_1,\dots,a_{n-1}]$. By definition, we have
    $X_\mathcal{G}(q) = X_{\mathcal{G}'}(q) \Lambda_m^-(q,a_n)$, and so entry-wise we have
    \begin {align*} 
        X_\mathcal{G}(q)_{ij} &= \sum_k X_{\mathcal{G}'}(q)_{ik} \Lambda_m^-(q,a_n)_{kj} \\
                    &= \sum_k q^{k-1} \overline{\Omega}_m^{ik}(\mathcal{G}',q) \Lambda_m^-(q,a_n)_{kj} \\
                    &= \sum_k q^{k-1} \overline{\Omega}_m^{ik}(\mathcal{G}',q) q^{m+2-k-j} q^{(k-1)a_n} \mbinom{a_n}{m+2-k-j}_q \\
                    &= \sum_k q^{m+1-j} q^{(k-1)a_n} \overline{\Omega}_m^{ik}(\mathcal{G}',q) \mbinom{a_n}{m+2-k-j}_q
    \end {align*}
    Next, let us compute $\Omega_m^{ij}(\mathcal{G},q)$ with a combinatorial argument, and compare with this formula. Since we are assuming $n$ is even, the graph ends
    with a horizontal segment of boxes going left-to-right. Consider the corner box joining $\mathcal{G}'$ with this new segment. Let us enumerate the $P$-partitions
    which assign a label of $(k-1)$ to this corner box. By the definition of $P$-partitions, all boxes below and to the right must be at least $k-1$. Choosing the labels
    below the corner box gives all terms in the generating function $\overline{\Omega}_m^{ik}(\mathcal{G}',q)$. Choosing all possible labelings of the boxes to the right
    of the corner box gives all $P$-partitions of an $(a_n-1)$-element chain with values in the range $\{k-1,k,\dots,m+j-1\}$.
    As discussed above, summing over these possibilities gives $q^{(a_n-1)(k-1)}\mbinom{a_n}{m+2-k-j}_q$. Since the labels in the boxes below and to the right of the corner
    can be chosen independently, we simply multiply these polynomials. Lastly, we also need to multiply by $q^{k-1}$ to account for the corner box. Now sum over all possible $k$ to get
    \[ \Omega_m^{ij}(\mathcal{G},q) = \sum_k \overline{\Omega}_m^{ik}(\mathcal{G}',q) q^{(k-1)a_n}\mbinom{a_n}{m+2-k-j}_q \]
    Comparing with the expression for $X_\mathcal{G}(q)_{ij}$ above, we see it differs only by a factor of $q^{m+1-j}$. This proves the theorem in the case when $n$ is even.
    
    The case when $n$ is odd is similar. The recurrence for the matrix entries is
    \[ X_{\mathcal{G}}(q)_{ij} = \sum_k q^{m+1-k}q^{(j-1)a_n} \Omega^{i,k}_m(\mathcal{G}',q) \mbinom{a_n}{m+2-k-j}_q \]
    and the recurrence for $\overline{\Omega}^{ij}_m(\mathcal{G},q)$ from a slightly modified version of the combinatorial argument above is
    \[ \overline{\Omega}_m^{ij}(\mathcal{G},q) = \sum_k q^{m+1-k}q^{(j-1)(a_n-1)}{\Omega}_m^{ik}(\mathcal{G}',q) \mbinom{a_n}{m+2-k-j}_q\]
    We see that they differ only by a factor of $q^{j-1}$, which completes the proof.
\end {proof}

\begin {example}
    For $\mathcal{G} = \mathcal{G}[2,2]$, pictured in Figure \ref{fig:poset}, we have
    \[
        X_{\mathcal{G}}(q) = \Lambda_2^+(2,q) \Lambda_2^-(2,q) = \begin{pmatrix}
            q^2 \Omega_2^{11}(\mathcal{G},q) & q \Omega_2^{12}(\mathcal{G},q) & \Omega_2^{13}(\mathcal{G},q) \\[0.8ex]
            q^2 \Omega_2^{21}(\mathcal{G},q) & q \Omega_2^{22}(\mathcal{G},q) & \Omega_2^{23}(\mathcal{G},q) \\[0.8ex]
            q^2 \Omega_2^{31}(\mathcal{G},q) & q \Omega_2^{32}(\mathcal{G},q) & \Omega_2^{33}(\mathcal{G},q) 
        \end{pmatrix}
    \]
    For example, $\Omega_2^{11}(\mathcal{G},q) = 1+2q+3q^2+3q^3+3q^4+q^5+q^6$, which is the full rank generating function for the poset $\Omega_2(\mathcal{G})$.
    As another example, $\Omega_2^{21}(\mathcal{G},q) = 1+2q+2q^2+2q^3+q^4$, which is the rank generating function for the sub-poset $\Omega_2^{21}(\mathcal{G})$,
    consisting of those plane partitions where the bottom-left box is labelled by either $0$ or $1$ (but not $2$).
\end {example}

\begin{remark}
The results in this section can be readily adapted to the equivalent setting of $m$-dimer covers on snake graphs.  Fix a continued fraction $[a_1,\dots,a_n]$.  In this setting, each $P$-partition in $\Omega_m(\calG[a_1,\dots,a_n])$ corresponds to an $m$-dimer cover on a certain planar graph (the dual snake graph $\calG^*[a_1,\dots,a_n]$).  As shown in \cite{mosz_23}, the $m$-dimer covers on $\calG^*[a_1,\dots,a_n]$ can be enumerated via a weighted sum of $m$-dimer covers on the graph $\calG[n]$ consisting of a straight column of $n-1$ squares (note that this graph is referred to as $\calG[1^n]$ in \cite{mosz_23}).  This extends to the higher $q$-rational case by modifying the weighted sum by appropriate powers of $q$ as follows.  

Let $\delta_{\max}$ be the maximal element of the poset $\calG[n]$, that is, the unique $m$-dimer cover of $\calG[n]$ with weight $m$ on the top edge and weight $0$ on all interior edges.  Then for any $\delta \in \Omega_m[n]$, we define $\wt(\delta) = |\delta \cap \delta_{\max}|$.  Let $\widetilde \Omega_m^{ij}[n]$ be the set of $m$-dimer covers of $\calG[n]$ with weight $i$ on the bottom edge and $j$ on the top edge.  We then define
$$^i\llbracket a_1,\dots,a_n\rrbracket^j_{m,q} := \sum_{\delta \in \widetilde \Omega_m^{ij}[n]} q^{\wt(\delta)} \prod_{i=1}^n  \binom{a_i + \delta(e_i) - 1}{\delta(e_i)}_q\,.$$
Following the methods of \cite{mosz_23}, it can be shown that the rank generating function for $\widetilde \Omega_m^{ij}[n]$ is given by $^i\llbracket a_1,\dots,a_n\rrbracket^j_{m,q}$.  Moreover, a straightforward adaptation of the proof of \cite[Lemma 3.9]{mosz_23} yields the same recurrence for $m$-dimer covers as in the proof of Theorem~\ref{thm:q_matrix_formula}.  Thus the entries of $X_{\calG[a_1,\dots,a_n]}(q)$ can be interpreted as (shifted) rank generating functions for the sets $\widetilde \Omega_m^{ij}[n]$.
\end{remark}

\bigskip

\section {Higher $q$-continued fractions} \label{sec:higher_q_cfs}

In \cite{mgo_20}, Morier-Genoud and Ovsienko defined a $q$-analog of rational numbers generalizing the classical $q$-integers $[n]_q = 1+q+q^2+\cdots+q^{n-1}$.
Their definition was a $q$-deformation of the continued fraction representation of a number. Specifically, if $x \in \Bbb{Q}$ has continued fraction
$x = [a_1,a_2,\dots,a_{2m}]$ with with all $a_i$'s positive integers, then their definition is
\[ 
    [x]_q := [a_1]_q + \cfrac{q^{a_1}}{
        [a_2]_{q^{-1}} + \cfrac{q^{-a_2}}{
            [a_3]_q + \cfrac{q^{a_3}}{
                [a_4]_{q^{-1}} + \cfrac{q^{-a_4}}{
                    \ddots + \cfrac{q^{a_{2m-1}}}{
                        [a_{2m}]_{q^{-1}}
                    }
                }
            }
        }
    }
\]

Since their introduction, several combinatorial interpretations of these polynomials have been given.
We now take a moment to collect some of these various known results. Let $\frac{r}{s} = [a_1,\dots,a_n]$ be a rational number $(\geq 1)$,
and let $\mathcal{G} = \mathcal{G}[a_1,\dots,a_n]$ be the corresponding snake graph,
and let $\mathcal{G}^*$ be the \emph{dual snake graph}\footnote{See \cite{claussen} for an explanation of the duality construction for snake graphs.}.
Let $\left[\frac{r}{s}\right]_q = \frac{\mathcal{R}(q)}{\mathcal{S}(q)}$ be Morier-Genoud and Ovsienko's $q$-rational. Then we have the following:
\begin {enumerate}
    \item[(a)] $\mathcal{R}(q) = \sum_I q^{|I|}$, where the sum is over order ideals of $\mathcal{G}$, thought of as a fence poset.

    \medskip

    \item[(b)] $\mathcal{R}(q) = \sum_{p} q^{|p|}$, where the sum is over all north-east lattice paths
               on $\mathcal{G}$, and $|p|$ is the area underneath the path. 

    \medskip

    \item[(c)] $\mathcal{R}(q) = \sum_m q^{\mathrm{ht}(m)}$, where the sum is over perfect matchings of the dual snake graph $\mathcal{G}^*$,
               and $\mathrm{ht}(m)$ is the height of the perfect matching.

    \medskip

    \item[(d)] Viewing $\mathcal{G}$ as a skew shape $\lambda / \mu$ (i.e. the difference of two Young diagrams), $q^{|\mu|}\mathcal{R}(q)$ is the number of $k \times N$ matrices over the finite field $\Bbb{F}_q$ in reduced row echelon form
               which are representatives of the Schubert cells $\mu \leq \nu \leq \lambda$, where $k = \sum_i a_{2i}$ and $N = \sum_i a_i$.

    \medskip

    \item[(e)] $q\mathcal{R}(q) + (1-q)\mathcal{S}(q)$ is the normalized Jones polynomial of the two-bridge link obtained as the closure of
               the rational tangle of $\frac{r}{s} = [a_1,\dots,a_n]$.

    \medskip

    \item[(f)] $\mathcal{R}(q) = F(q,q,\dots,q)$ is the specialization of the $F$-polynomial of a certain cluster variable in a cluster algebra
               whose initial seed is a type $A$ Dynkin quiver.
\end {enumerate}

Part $(a)$ appeared as Theorem 4 in \cite{mgo_20}. The equivalence between $(a)$ and $(b)$ is straightforward. The fact that $(b)$ and $(c)$ are equivalent
follows from a bijection that was observed in \cite{propp} and \cite{claussen}. The equivalence between $(a)$ and $(c)$ was observed in \cite{msw_13} (Theorem 5.4).
Part $(d)$ appeared in \cite{ovenhouse_23}. Parts $(e)$ and $(f)$ appeared in the appendices of \cite{mgo_20}.

\bigskip

In \cite{mosz_23}, the authors defined a notion of \emph{higher continued fractions}. After reviewing the construction, we will explain
how the ratios of the polynomials $\Omega_m(\mathcal{G},q)$ from the previous section are $q$-analogs of higher continued fractions.
The higher continued fractions are given by a series of maps $r_{i,m} \colon \Bbb{Q}_{\geq 1} \to \Bbb{Q}_{\geq 1}$
which are defined recursively by a formula similar to the definition of ordinary continued fractions. Indeed, $r_{1,1}$ is the identity map, and
the corresponding recursion is simply the ordinary continued fraction definition. More generally, we have the following.

\begin {definition} \cite{mosz_23}
    Let $x \in \Bbb{Q}$ with $x \geq 1$, and let $0 \leq i \leq m$ be integers. Define $r_{0,m}(x) = 1$ for any $x$,
    and for an integer $n$, define $r_{i,m}(n) := \mbinom{n}{i} = \binom{n+i-1}{i}$.
    Otherwise if $x = [a_1,a_2,\dots,a_k]$ and $x' = [a_2,a_3,\dots,a_k]$, then
    \[ r_{i,m}(x) := \frac{1}{r_{m,m}(x')} \sum_{k=0}^i \mbinom{a_1}{k} r_{m-i+k,m}(x') \]
    We also define the vector $\mathrm{CF}_m(x) = \left( r_{m,m}(x), \, r_{m-1,m}(x), \dots r_{1,m}(x), \, r_{0,m}(x) \right)^\top$, which we
    call the \emph{$m$-continued fraction} of $x$.
\end {definition}

\bigskip

\begin {example}
    When $m=1$, we have $\mathrm{CF}_1(x) = (x,1)$ for any $x$, and the recurrence is the usual continued fraction recurrence:
    \[ [a_1,\dots,a_n] = a_1 + \frac{1}{[a_2,\dots,a_n]} \]
\end {example}

\begin {example}
    When $m=2$, we have $\mathrm{CF}_2(x) = \left( r_{2,2}(x), \, r_{1,2}(x), \, 1 \right)$, and these numbers satisfy the recurrences:
    \[ r_{1,2}(x) = a_1 + \frac{r_{1,2}(x')}{r_{2,2}(x')}, \quad \quad r_{2,2}(x) = \mbinom{a_1}{2} + a_1 \frac{r_{1,2}(x')}{r_{2,2}(x')} + \frac{1}{r_{2,2}(x')} \]
    For example, the continued fraction for $17/3$ is $[5,1,2]$, and using these recurrences, we get
    \[ 
        \mathrm{CF}_2\left( \frac{17}{3} \right) = \left( \frac{59}{3}, \, \frac{35}{6}, \, 1 \right) 
    \]
\end {example}

\begin {remark}
This recursive definition for $r_{im}(x)$ was originally designed to mimic the recursion of successively multiplying a sequence of $\Lambda(a_i)$ matrices.
Indeed, if $x = [a_1,\dots,a_n]$ and $x' = [a_2,\dots,a_n]$, with corresponding snake graphs $\mathcal{G}$ and $\mathcal{G}'$, then
\[ r_{im}(x) = \frac{\left| \Omega_m^{m+1-i,1}(\mathcal{G}) \right|}{\left| \Omega_m(\mathcal{G}') \right|} \]
(see \cite{mosz_23}, Theorem 6.4).
\end {remark}

In a similar fashion, we can define a $q$-analog of the $r_{i,m}$ recurrence which coincides with the multiplication of the $q$-deformed matrices.

\bigskip

\begin {definition} \label{def:r_im_q_recurrence}
    Define $r^q_{i,m} \colon \Bbb{Q}_{\geq 1} \to \Bbb{Z}(q)$ as follows. For $i=0$, let $r^q_{0,m}(x) = 1$ for any $x$. For $x=n \in \Bbb{N}$, we define
    \[ r^q_{i,m}(n) := \mbinom{n}{i}_q = \binom{n+i-1}{i}_q. \]
    In all other cases, if $x = [a_1,\dots,a_n]$ and $x'= [a_2,\dots,a_n]$, we have the recursive definition
    \[ r^q_{i,m}(x) := \sum_{k=0}^i q^{ka_1}\mbinom{a_1}{i-k}_q \frac{r^{q^{-1}}_{m-k,m}(x')}{r^{q^{-1}}_{mm}(x')} \]
\end {definition}

\bigskip

\begin {example}
    From Example \ref{ex:m=2_matrix_product} we can see that $r_{2,2} \left( \frac{5}{2} \right) = \frac{14}{3} = 4 + \frac{2}{3}$. 
    From Definition \ref{def:r_im_q_recurrence}, the $q$-analog is given by
    \[ r^q_{2,2} \left( \frac{5}{2} \right) = \frac{1+2q+3q^2+3q^3+3q^4+q^5+q^6}{1+q+q^2} = q(1+2q+q^3) + \frac{[2]_q}{[3]_q} \]
\end {example}

\bigskip

\begin{remark} \label{rmk:quotient_formula}
	Let $\calG = \calG[a_1,\cdots,a_n]$ and $X_\calG(q)$ the matrix product for $\calG$. 
        In the $q=1$ case, the higher continued fractions $[r_{m,m}(\calG):\cdots:r_{1,m}(\calG):1]$ 
        viewed as a projective point agree with the first column of the matrix $X_{\mathcal{G}}$. In other words, we have
	\[r_{i,m}(x)={X_{m+1-i,1}\over X_{m+1,1}} = \frac{\left| \Omega^{m+1-i,1}_m(\calG) \right|}{\left| \Omega_{m}({\calG'}) \right|}\] 
        where $\calG'=\calG [1,a_2-1,a_3,\cdots,a_n]$\footnote{Note that $\calG[a_2,\dots,a_n]$ and $\calG[1,a_2-1,\dots,a_n]$ are isomorphic as graphs,
        but their planar embedding is different, which makes a difference in the polynomial $\Omega_m(\calG',q)$.}. 
        In the $q$-deformed case, we have the analogous identity
        \[r_{i,m}^q(x)=\frac{\Omega^{m+1-i,1}_m(\calG,q)}{\Omega_{m}({\calG',q})}.\]
\end{remark}

\section {The Stabilization Phenomenon} \label{sec:stabilization}

In \cite{mgo_22}, Morier-Genoud and Ovsienko observed an intriguing property of their $q$-rational numbers, which they dubbed the \emph{stabilization phenomenon}.
The observation is that one can extend the definition to define $q$-analogs of irrational numbers, which will be formal Laurent series with integer coefficients.
More specifically, we have the following result.

\begin {theorem} \cite{mgo_22}
    Let $x$ be an irrational number, and $x_1,x_2,x_3,\dots$ a sequence of rationals converging to $x$. Then the sequence $[x_n]_q$ 
    has a well-defined limit as a formal Laurent series. In other words, if $[x_n]_q = \sum_k c_{n;k}q^k$, then for each $k$ the sequence $c_{n;k}$
    is eventually constant. 
\end {theorem}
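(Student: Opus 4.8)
The plan is to prove the stabilization directly from the transfer-matrix description of $[x]_q$ established above, the key mechanism being that the determinant of each transfer matrix is a positive power of $q$. Throughout I write $\frac{p}{s} = [a_1,\dots,a_k]$ with all $a_i \geq 1$ (fixing the convention $a_k \geq 2$ so that the continued fraction of a rational $\geq 1$ is unique), and I expand $[x]_q$ as a formal power series in $q$; the general case of an arbitrary real $x$ reduces to $x > 1$ via the relation $[x+1]_q = q[x]_q + 1$ (prepending a term corresponds to left multiplication by $R_1(q)$), which commutes with taking coefficientwise limits, so a global power of $q$ converts the resulting Laurent series back to a power series without affecting stabilization.

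First I would record the structural facts I need, all specializations of the results above to $m=1$. By Remark \ref{rmk:lambda_entries} and the explicit formulas for $R_1(q)^a, L_1(q)^a$, the deformed transfer matrices $\Lambda_1^+(q,a) = \left(\begin{smallmatrix}{[a]_q} & q^a \\ 1 & 0\end{smallmatrix}\right)$ and $\Lambda_1^-(q,a) = \left(\begin{smallmatrix}q[a]_q & 1 \\ q^a & 0\end{smallmatrix}\right)$ have nonnegative polynomial entries and determinant $-q^a$. Consequently the first-column ratio of the product $X_{\calG[a_1,\dots,a_k]}(q) = \Lambda_1^{+}(q,a_1)\Lambda_1^{-}(q,a_2)\cdots$ equals $[a_1,\dots,a_k]_q = \mathcal{R}/\mathcal{S}$ (Theorem \ref{thm:q_matrix_formula} and Remark \ref{rmk:quotient_formula}), where $\mathcal{R},\mathcal{S} \in \Bbb{Z}_{\geq 0}[q]$ each have constant term $1$; in particular $\mathcal{S}$ is a unit in $\Bbb{Z}[[q]]$ and $[x]_q$ is a genuine power series.

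The heart of the argument is a prefix-agreement estimate: \emph{if two rationals $y,y' \geq 1$ have continued fractions sharing the first $k$ terms $a_1,\dots,a_k$, then $[y]_q \equiv [y']_q$ modulo $q^{N_k - 2}$, where $N_k := a_1 + \cdots + a_k$.} To prove this I factor $X_{\calG_y}(q) = P\,T_y$, where $P = \Lambda_1^{+}(q,a_1)\cdots$ is the product of the first $k$ shared matrices and $T_y$ encodes the tail, and likewise $X_{\calG_{y'}}(q) = P\,T_{y'}$. Writing $\mathbf{u} = (u_1,u_2)^{\top}$ and $\mathbf{u}' = (u_1',u_2')^{\top}$ for the first columns of $T_y$ and $T_{y'}$ (these lie in $\Bbb{Z}_{\geq 0}[q]^2$), a direct expansion gives the cross-ratio identity
\[ (P\mathbf{u})_1 (P\mathbf{u}')_2 - (P\mathbf{u}')_1 (P\mathbf{u})_2 = (\det P)\,(u_1 u_2' - u_1' u_2) = (-1)^k q^{N_k}\,(u_1 u_2' - u_1' u_2). \]
Since the columns $P\mathbf{u}$ and $P\mathbf{u}'$ are, up to common powers $q^{s_y}, q^{s_{y'}}$ (with $0 \leq s_y, s_{y'} \leq 1$) coming from the shift in Theorem \ref{thm:q_matrix_formula}, exactly $(\mathcal{R}_y,\mathcal{S}_y)^{\top}$ and $(\mathcal{R}_{y'},\mathcal{S}_{y'})^{\top}$, the left-hand side equals $q^{s_y + s_{y'}}(\mathcal{R}_y\mathcal{S}_{y'} - \mathcal{R}_{y'}\mathcal{S}_y)$. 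Comparing, the integer polynomial $\mathcal{R}_y\mathcal{S}_{y'} - \mathcal{R}_{y'}\mathcal{S}_y$ is divisible by $q^{N_k - s_y - s_{y'}}$, hence has order $\geq N_k - 2$; dividing by the unit $\mathcal{S}_y\mathcal{S}_{y'}$ shows $[y]_q - [y']_q$ has order $\geq N_k - 2$, as claimed.

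Finally I would assemble the pieces. For irrational $x = [a_1,a_2,\dots]$ and any $K$, the depth-$K$ continued-fraction cylinder is an interval containing $x$ in its interior, so all rationals sufficiently close to $x$ begin with $a_1,\dots,a_K$; hence there is $n_K$ such that for all $n,n' \geq n_K$ the rationals $x_n,x_{n'}$ share the first $K$ terms, and the estimate gives $[x_n]_q \equiv [x_{n'}]_q \pmod{q^{N_K - 2}}$. As $N_K \to \infty$, the sequence $([x_n]_q)$ is Cauchy in the $q$-adic topology on $\Bbb{Z}[[q]]$, so each coefficient $c_{n;k}$ is eventually constant and the limit is a well-defined power series (respectively Laurent series after the reduction above). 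The step I expect to be the main obstacle is the bookkeeping in the prefix-agreement estimate — specifically verifying that the shift exponents $s_y, s_{y'}$ are bounded independently of the tails so that the factor $q^{N_k}$ from the determinant genuinely survives as a growing lower bound on the order of agreement; the determinant identity itself is the clean engine driving the whole proof.
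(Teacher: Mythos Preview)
Your argument is correct. The engine you isolate --- the determinant identity $\det P = (-1)^k q^{N_k}$ for the prefix product, combined with the $2\times 2$ cross-ratio formula $\det[P\mathbf{u}\mid P\mathbf{u}'] = (\det P)\det[\mathbf{u}\mid\mathbf{u}']$ --- is exactly the mechanism behind the alternate proof the paper sketches in the Remark following Theorem~\ref{thm:stabilization}. Your shift bookkeeping is also fine: for $m=1$ the factors $s_y,s_{y'}$ from Theorem~\ref{thm:q_matrix_formula} lie in $\{0,1\}$, so the bound $N_k-2$ is legitimate and grows without bound.

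That said, the paper's \emph{primary} proof is organized differently, and the comparison is instructive. First, the paper compares only consecutive convergents $x_{n-1},x_n$ and then, in a separate short argument, upgrades to arbitrary sequences converging to $x$; you bypass this two-step structure by comparing directly any pair of rationals sharing a length-$k$ prefix, which is tidier. Second, rather than invoking the determinant algebraically, the paper interprets the relevant $2\times 2$ minor via Lindstr\"om--Gessel--Viennot as a weighted sum over pairs of non-intersecting paths in an explicit planar network built from concatenated $R_m(q)$- and $L_m(q)$-pieces, and then reads off the lowest-order term by exhibiting the unique minimal-weight path pair. This buys a sharper constant (agreement up to $q^{N_k-1}$, with the exact leading term visible) at the cost of more setup. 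Third, your cross-ratio identity is specific to $2\times 2$ matrices acting on vectors, so as written it establishes only the $m=1$ statement; the paper's LGV argument and its Cauchy--Binet-style remark both handle general $m$, where one must track $2\times 2$ minors of an $(m{+}1)\times(m{+}1)$ product rather than a full determinant.
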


\bigskip

In the notation of the theorem, if the sequence $c_{1;k}$, $c_{2;k}$, $c_{3;k}$, $\dots$ is eventually constant, with $c_{n;k} = c_k$ for $n$ large enough,
we define $[x]_q$ as the Laurent series $\sum_k c_kq^k$. Moreover, Morier-Genoud and Ovsienko gave a description of the rate at which the coefficients
stabilize. If the irrational number $x$ has infinite continued fraction $[a_1,a_2,\dots]$, and if $x_n = [a_1,\dots,a_n]$ are its convergents, then
$[x_{2n}]_q$ and $[x_{2n-1}]_q$ agree (as Laurent series) up to the $q^{a_1+a_2+\cdots+a_n-1}$ term.

\bigskip

\begin {example}
    Consider the real number $\sec(7) = \frac{1}{\cos(7)} \approx 1.3264$, with continued fraction $[1,3,15,1,3,3,\dots]$. 
    Truncating the continued fraction here, we can compute the series
    for $[\sec(7)]_q$ accurately to more than 20 terms, giving
    \[ [\sec(7)]_q = 1 + q^4 - q^6 - q^7 + q^8 + 2q^9 - 3q^{11} - 2q^{12} + 3q^{13} + 5q^{14} - q^{15} - 8q^{16} - 4q^{17} + 9 q^{18} + 11q^{19} + \cdots \]
\end {example}

It was also shown in \cite{mosz_23} that the definition of higher continued fractions extends to irrational values.
Specifically if $x_n$ are the rational convergents of an irrational number $x$,
then the limit $\lim_{n \to \infty} r_{im}(x_n)$ exists, and we define $r_{im}(x)$ to be the value of this limit when $x$ is irrational.
We will show in this section that the stabilization phenomenon holds for the higher $q$-rationals. That is, we can define $r^q_{im}(x)$
as some formal Laurent series when $x$ is irrational.

Before proving the main result, we will take a brief detour to explain a combinatorial interpretation of the $R$ and $L$ matrices in terms
of lattice paths (since it will be used in the proof). Let $G$ be an edge-weighted directed graph, and specify two subsets of vertices $\{v_1,\dots,v_k\}$ and $\{w_1,\dots,w_n\}$,
where the $v_i$'s are all sources, and the $w_j$'s are all sinks. We will call such a graph (together with weights and choices of sources/sinks) a \emph{network}.
We then form the path weight matrix $M_G$ whose entries $m_{ij}$ are the generating functions for paths starting at $v_i$ and ending at $w_j$. That is,
\[ m_{ij} := \sum_{p \colon v_i \to w_j} \mathrm{wt}(p) \]
Here, the weight of a path is the product of the edge weights. There are two important and well-known properties of these matrices. First, concatenation of
networks corresponds to matrix multiplication. That is, if $G_1$ and $G_2$ are such networks (such that the number of sinks of $G_1$ equals the number of sources of $G_2$),
and we form the network $G_1 \# G_2$ obtained by identifying the sources of $G_2$ with the sinks of $G_1$, then $M_{G_1 \# G_2} = M_{G_1} M_{G_2}$.

Second, there is the famous Lindstr\"{o}m-Gessel-Viennot formula \cite{gv_89}, which says the following. 
If $G$ is planar, and $I$ and $J$ are subsets of the source and sink vertices respectively,
with $|I|=|J|=k$, then the $k \times k$ minor of $M_G$ with row set $I$ and column set $J$ is a sum over families of non-crossing paths from $I$ to $J$. That is,
\[ \det \left( M[I,J] \right) = \sum_{(p_1,\dots,p_k)} \mathrm{wt}(p_1) \cdots \mathrm{wt}(p_k) \]
where the sum is over all tuples of non-intersecting paths which begin at the vertices of $I$ and end at the vertices of $J$.

Recall the matrices $R_m$ and $L_m$, which are upper-triangular and lower-triangular matrices filled with $1$'s. It is easy to find planar networks whose path-weight matrices
are $R_m$ and $L_m$. We describe them now (see Figure \ref{fig:path_weight_matrices} for an illustration). 
Define $G$ to be the network with $m+1$ horizontal strands, directed left-to-right. In the middle of each strand, place a vertex. Between each adjacent pair
of strands, connect the vertices with a downward-pointing vertical arrow. The resulting network has path-weight matrix given by $R_m$. The same construction, but with the vertical
edges oriented upwards, corresponds to $L_m$. Since $R_m(q)$ and $L_m(q)$ are obtained by right-multiplication with a diagonal matrix, we can simply label the horizontal edges
with $q^m$, $q^{m-1}$, $\dots$, $q$, $1$ from top-to-bottom to obtain networks with path-weight matrices given by $R_m(q)$ and $L_m(q)$. 

\begin {figure}
\centering
\begin {tikzpicture}
    \foreach \x in {0,1,2} {
    \foreach \y in {0,1,2,3} {
        \draw[fill=black] (\x,\y) circle (0.06);
    }
    }

    \foreach \x in {0,1} {
    \foreach \y in {0,1,2,3} {
        \draw[-latex] (\x,\y) -- (\x+0.94,\y);
    }
    }

    \foreach \y in {1,2,3} {
        \draw[-latex] (1,\y) -- (1,\y-0.94);
    }

    \draw (1.6,3) node[above] {$q^3$};
    \draw (1.6,2) node[above] {$q^2$};
    \draw (1.6,1) node[above] {$q$};

    \draw (-0.1,0) node[left] {$4$};
    \draw (-0.1,1) node[left] {$3$};
    \draw (-0.1,2) node[left] {$2$};
    \draw (-0.1,3) node[left] {$1$};

    \draw (2.1,0) node[right] {$4$};
    \draw (2.1,1) node[right] {$3$};
    \draw (2.1,2) node[right] {$2$};
    \draw (2.1,3) node[right] {$1$};

    \draw (1,-0.75) node {$R_3(q)$};

    \begin {scope}[shift={(5,0)}]
        \foreach \x in {0,1,2} {
        \foreach \y in {0,1,2,3} {
            \draw[fill=black] (\x,\y) circle (0.06);
        }
        }

        \foreach \x in {0,1} {
        \foreach \y in {0,1,2,3} {
            \draw[-latex] (\x,\y) -- (\x+0.94,\y);
        }
        }

        \foreach \y in {0,1,2} {
            \draw[-latex] (1,\y) -- (1,\y+0.94);
        }

        \draw (1.6,3) node[above] {$q^3$};
        \draw (1.6,2) node[above] {$q^2$};
        \draw (1.6,1) node[above] {$q$};

        \draw (-0.1,0) node[left] {$4$};
        \draw (-0.1,1) node[left] {$3$};
        \draw (-0.1,2) node[left] {$2$};
        \draw (-0.1,3) node[left] {$1$};

        \draw (2.1,0) node[right] {$4$};
        \draw (2.1,1) node[right] {$3$};
        \draw (2.1,2) node[right] {$2$};
        \draw (2.1,3) node[right] {$1$};

        \draw (1,-0.75) node {$L_3(q)$};
    \end {scope}
\end {tikzpicture}
\caption {The networks whose path-weight matrices are $R_3(q)$ and $L_3(q)$.}
\label {fig:path_weight_matrices}
\end {figure}
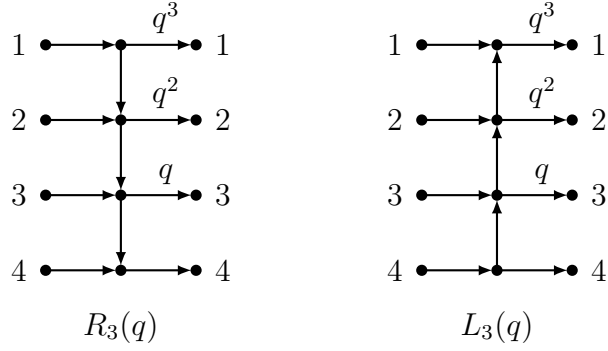

Recall that for $\mathcal{G} = \mathcal{G}[a_1,\dots,a_n]$, the matrix $X_\mathcal{G}(q)$ is given by
\begin {align*} 
    X_\mathcal{G}(q) &= \Lambda_m^+(q,a_1)\Lambda_m^-(q,a_2) \cdots \Lambda_m^+(q,a_{n-1})\Lambda_m^-(q,a_n) \\
           &= R_m(q)^{a_1}L_m(q)^{a_2} \cdots R_m(q)^{a_{n-1}}L_m(q)^{a_n} 
\end {align*}
if $n$ is even, and
\begin {align*} 
    X_\mathcal{G}(q) &= \Lambda_m^+(q,a_1)\Lambda_m^-(q,a_2) \cdots \Lambda_m^+(q,a_n) \\
           &= R_m(q)^{a_1}L_m(q)^{a_2} \cdots R_m(q)^{a_n}W_m
\end {align*}
if $n$ is odd. Since matrix multiplication corresponds to concatenation of networks, we can form a network corresponding to $X_\mathcal{G}(q)$ 
by attaching $a_1$ copies of the $R_m(q)$ network,
and then $a_2$ copies of the $L_m(q)$ network, and so on. If $n$ is even, this will correspond to $X_\mathcal{G}(q)$, and if $n$ is odd, it will correspond to $X_\mathcal{G}(q)W_m$.
If $x$ has continued fraction $[a_1,a_2,\dots,a_n]$, let us call this network $\Gamma_x$.

After this digression, we can now prove Theorem B from the introduction.

\begin {theorem} \label{thm:stabilization}
    Suppose $x$ is an irrational number with continued fraction $[a_1,a_2,a_3,\dots]$, and let $x_n = [a_1,\dots,a_n]$ be its
    rational convergents. The following statements hold for any $0 \leq i \leq m$. 
    \begin {enumerate}
        \item[(a)] If $n$ is even, then the Laurent series for $r^q_{im}(x_n)$ and $r^q_{im}(x_{n-1})$ agree up to $q^{a_1+a_2+\cdots+a_n-1}$.

        \item[(b)] If $n$ is odd, then the Laurent series for $r^q_{im}(x_n)$ and $r^q_{im}(x_{n-1})$ agree up to $q^{a_1+a_2+\cdots+a_{n-1}-1}$.
    \end {enumerate}
\end {theorem}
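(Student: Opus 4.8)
The plan is to work with the network $\Gamma_x$ built from $R_m(q)$- and $L_m(q)$-blocks, and to track how the path-weight matrix changes when we append the blocks corresponding to $a_n$. By Remark \ref{rmk:quotient_formula}, $r^q_{im}(x_n)$ is the ratio $\Omega^{m+1-i,1}_m(\calG_n,q)/\Omega_m(\calG_n',q)$, which up to a fixed monomial factor (independent of $n$ in the relevant parity, by Theorem \ref{thm:q_matrix_formula}) is a ratio of two entries in the first column of $X_{\calG_n}(q)$. So it suffices to understand the first column of $X_{\calG_n}(q)$ modulo a high power of $q$. The key structural fact is that all the off-diagonal and ``deeper'' entries of $R_m(q)^{a}$ and $L_m(q)^{a}$ carry large powers of $q$: from Lemma \ref{lem:powers_of_R}, $(R_m(q)^a)_{ij} = q^{a(m+1-j)}\mbinom{a}{j-i}_q$, so any path in the network that travels down (in an $R$-block) or up (in an $L$-block) and ends away from the bottom strand picks up a factor of $q$ to a power that grows with the $a_i$'s it has passed through. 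The first step is to make this precise: I would show that, reading the network from left to right, a path contributing to the $(i,1)$-entry of $X_{\calG_n}(q)$ that has not yet ``descended to strand $m+1$'' by the end of the $k$-th block contributes with $q$-valuation at least $a_1 + a_2 + \cdots + a_k$ (roughly — the exact bound is what the statement's $-1$ is bookkeeping for).

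The second step is the comparison between $x_n$ and $x_{n-1}$. The network $\Gamma_{x_n}$ is obtained from $\Gamma_{x_{n-1}}$ by appending $a_n$ more copies of an $R$- or $L$-block (and possibly conjugating by $W_m$ to handle the parity flip in how odd-length products are normalized). So $X_{\calG_n}(q) = X_{\calG_{n-1}}(q)\cdot B^{a_n}$ (up to $W_m$'s), where $B$ is $R_m(q)$ or $L_m(q)$. Write $B^{a_n} = D + N$, where $D$ is the diagonal part $q^{a_n(m+1-j)}$ on strand $j$ and $N$ collects the strictly triangular part, every entry of which is divisible by $q$ to a power $\ge \min_j(\text{stuff}) $ — concretely by at least $q^{a_n}$ in the crucial corner, because the entries that matter for feeding into the next stage are exactly the ones far from the absorbing strand. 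Then $X_{\calG_{n-1}}(q)D$ only rescales columns by monomials, and $X_{\calG_{n-1}}(q)N$ is divisible by a large power of $q$. Iterating this across all $n$ blocks and carefully accounting for which entries of the first column of $X_{\calG_{n-1}}(q)$ already carry high $q$-valuation (by step one), one gets that $X_{\calG_n}(q)$ and $X_{\calG_{n-1}}(q)$ — after the column-rescaling and $W_m$-bookkeeping that makes them directly comparable — agree modulo $q^{a_1+\cdots+a_n}$ (resp. $q^{a_1+\cdots+a_{n-1}}$ in the odd case). Passing to the ratio of first-column entries that defines $r^q_{im}$, and checking the normalizing monomial prefactors cancel, yields (a) and (b), with the $-1$ appearing because of the subtraction in the hockey-stick/Fermat identity or because the very lowest-degree term of the appended $q$-binomial is a genuine new contribution.

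An alternative, perhaps cleaner, route: argue directly at the level of the recursion in Definition \ref{def:r_im_q_recurrence}. One has $r^q_{im}(x_n)$ expressed via $r^{q^{-1}}_{km}(x_n')/r^{q^{-1}}_{mm}(x_n')$ where $x_n' = [a_2,\dots,a_n]$, and $\mbinom{a_1}{i-k}_q$ contributes terms of degree at most $a_1(i-k)$ but the $q^{ka_1}$ prefactor pushes the ``error terms'' (those involving $r_{km}$ for $k<m$, i.e. the non-dominant components) up by $q^{a_1}$. So if by induction on $n$ the components $r^{q^{-1}}_{km}(x_n')$ and $r^{q^{-1}}_{km}(x_{n-1}')$ for $x' = [a_2,\dots]$ agree up to $q^{a_2+\cdots+a_n-1}$ (resp.\ one lower), one propagates the agreement up by the factor $q^{a_1}$ after the recursion step, and the division $r^q_{im} = (\cdots)/r^{q^{-1}}_{mm}(x')$ preserves this since $r^{q^{-1}}_{mm}(x')$ has constant term $1$ (unit in $\Bbb{Z}[[q]]$). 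I would probably run the matrix/network version as the main argument and mention the recursive one as a remark.

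I expect the main obstacle to be the bookkeeping around parity and the $W_m$-conjugation: the statement compares $x_n$ with $x_{n-1}$ of opposite parity, and the matrix $X_\calG(q)$ is normalized differently (with the extra $W_m$, and with the $q^{m+1-j}$ vs.\ $q^{j-1}$ prefactors from Theorem \ref{thm:q_matrix_formula}) depending on parity, so making "agree up to degree $N$" a well-posed comparison of two honestly-defined Laurent series $r^q_{im}(x_n)$ and $r^q_{im}(x_{n-1})$ — rather than of auxiliary matrix entries — requires carefully unwinding how the $W_m$ and the monomial prefactors interact. The other delicate point is pinning down the exact exponent: showing the agreement holds \emph{up to} $q^{a_1+\cdots+a_n-1}$ (and that it can genuinely fail at that degree) needs the precise lowest-degree behavior of the appended $q$-binomial factors, not just a divisibility estimate. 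The claim that the stabilization rate is independent of $m$ should then fall out because these exponent bounds involve only the $a_i$'s and not the matrix size $m+1$.
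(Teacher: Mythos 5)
There is a genuine gap in your main route. You propose to show that the (rescaled) first columns of $X_{\calG_n}(q)$ and $X_{\calG_{n-1}}(q)$ agree entrywise modulo $q^{a_1+\cdots+a_n}$ and then pass to the ratio. That intermediate claim is false, and not just by the off-by-one you gesture at: the individual entries only agree to very low order, and the theorem is true solely because of a cancellation between the numerator and denominator discrepancies. Concretely, take $m=1$, $x_2=[2,2]=\tfrac52$ and $x_1=[2]=2$. Then $r^q_{11}(x_2)=\frac{1+2q+q^2+q^3}{1+q}$ and $r^q_{11}(x_1)=\frac{1+q}{1}$. The numerators differ by $q+q^2+q^3$ and the denominators by $q$ --- both only of order $q^1$ --- yet the cross-difference $\mathcal{R}\mathcal{B}-\mathcal{A}\mathcal{S}=(1+2q+q^2+q^3)-(1+q)^2=q^3$ has order exactly $q^{a_1+a_2-1}$, as the theorem asserts. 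So any argument that bounds the first-column entries separately (your $B^{a_n}=D+N$ decomposition does exactly this) cannot reach the stated exponent; one must work with the combination $\mathcal{R}\mathcal{B}-\mathcal{A}\mathcal{S}$. The paper's proof does precisely that: since the last column of $\Lambda^-_m(q,a_n)$ is $(1,0,\dots,0)^\top$, the last column of $X_{\calG_n}(q)$ equals the first column of $X_{\calG_{n-1}}(q)$, so the numerator of $r^q_{im}(x_n)-r^q_{im}(x_{n-1})$ is (up to a monomial) a $2\times 2$ minor of $X_{\calG_n}(q)$ in columns $\{1,m+1\}$; the Lindstr\"om--Gessel--Viennot lemma then expresses this minor as a sum over \emph{non-intersecting} pairs of paths in $\Gamma_{x_n}$ --- this is where the cancellation is encoded --- and the minimal such pair (one path along the bottom strand, the other hugging the second-to-bottom strand) has weight exactly $q^{a_1+\cdots+a_n-1}$. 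Your network picture and your valuation estimate for paths that stay above the bottom strand are the right ingredients, but they must be applied to the minor, not to single entries.

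Your alternative route through the recursion of Definition \ref{def:r_im_q_recurrence} has the same problem (you again track the components $r^{q^{-1}}_{km}(x')$ individually), compounded by the $q\leftrightarrow q^{-1}$ alternation: ``agreement up to $q^N$'' for power series expanded at $q=0$ does not transport through the substitution $q\mapsto q^{-1}$, so the inductive hypothesis is not in a usable form after one step of the recursion. Finally, even setting aside the main gap, your stated matrix-level exponent $q^{a_1+\cdots+a_n}$ is too strong (the true order of the minor is $a_1+\cdots+a_n-1$, attained), and attributing the $-1$ vaguely to ``the hockey-stick identity'' signals that the exact lowest-degree term has not been identified; in the paper it comes from the explicit minimal pair of non-intersecting paths.
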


\begin{remark}
    The powers of $q$ in the theorem do not depend on $m$. In particular this means Morier-Genoud and Ovsienko's result (Proposition 1.1 from \cite{mgo_22} for the $m=1$ case)
    holds in general for all $m$.
\end{remark}

\begin {proof}
    We want to consider the difference 
    \begin {align*} 
        r^q_{im}(x_n) - r^q_{im}(x_{n-1}) &= \frac{\Omega_m^{m+1-i,1}(P,q)}{\Omega_m^{m+1,1}(P,q)} - \frac{\Omega_m^{m+1-i,1}(P',q)}{\Omega_m^{m+1,1}(P',q)} \\
                                          &= \frac{\Omega_m^{m+1-i,1}(P,q)\Omega_m^{m+1,1}(P',q) - \Omega_m^{m+1-i,1}(P',q)\Omega_m^{m+1,1}(P,q)}{\Omega_m^{m+1,1}(P,q)\Omega_m^{m+1,1}(P',q)}
    \end {align*}
    To prove the theorem, we want to show that as a Laurent series, this difference has lowest term $q^N$ (where $N$ depends on the parity of $n$).
    Note that the denominator is a polynomial with constant term 1, and so its reciprocal is a power series with constant term $1$. It thus suffices to show that the
    numerator has minimal term $q^N$ for the appropriate $N$.

    The specific details of the proof will differ in the even and odd case, but the main idea in both cases is that the numerator of this difference coincides
    (up to a factor of $q^k$ for some $k$) with the $2 \times 2$ minor of the matrix $X_P(q)$ 
    in rows $I = \{m+1-i, m+1\}$ and columns $J = \{1, m+1\}$.
    
    By the discussion preceding this theorem, the $2 \times 2$ minors correspond to sums over pairs of non-intersecting paths in the network $\Gamma_{x_n}$.
    By examining this network carefully, it is not difficult to identify which pair of paths corresponds to the minimal term. We will then verify that
    in the even and odd cases, this minimal term corresponds to the correct $q^N$ stated in the theorem.

    Consider first the case when $n$ is even. By Theorem \ref{thm:q_matrix_formula}, the first column of $X_P(q)$ has a factor of $q^m$, and from the form
    of the $\Lambda_m^-(a_n)$ matrix, we see that the last column of $X_P(q)$ is the first column of $X_{P'}(q)$. Therefore the numerator we wish to analyze 
    is equal to $q^{-m} \Delta$, where $\Delta$ is the $2 \times 2$ minor of $X_P(q)$ mentioned above.

    By the Lindstr\"{o}m-Gessel-Viennot formula, $\Delta = \sum \mathrm{wt}(p_1)\mathrm{wt}(p_2)$, where the sum is over all pairs of non-intersecting paths
    on the network $\Gamma_{x_n}$ which start at vertices $m+1-i$ and $m+1$ on theft, and end at vertices $1$ and $m+1$ on the right. To identify the minimal term
    of $\Delta$, we need to identify the pair of paths whose weights has the smallest power of $q$. An example is pictured in Figure \ref{fig:gamma_minimal_paths}.
    Clearly, the bottom of the two paths must travel horizontally 
    across the bottom strand, contributing a weight of $1$. Since the horizontal edges have weights $q^m$, $q^{m-1}$, $\dots$, $q$, $1$ going top-to-bottom,
    in order to minimize the weight we just need to find the path which starts at vertex $m+1-i$ and stays as close to the bottom of the network for as long as possible
    (without touching the bottom horizontal strand), and eventually ends at the top vertex labelled 1 on the right. The path which does this will go vertically down
    to the horizontal level $m$ (the second from the bottom), and then travel horizontally all the way until the last vertical strip, at which point it will go up 
    to the top and end at vertex 1. Along the way, on the low horizontal strip it will pick up a total weight of $q^{a_1+a_2+\cdots+a_n-1}$. On the very last
    horizontal edge (after going up to the top strand) it will get an extra weight of $q^m$. But as we mentioned above, this factor of $q^m$ will cancel.

    The case when $n$ is odd is similar, and we omit the details.
\end {proof}

\begin {figure}
\centering
\begin {tikzpicture}
    \foreach \x in {0,1,2,3,4,5,6,7,8,9} {
        \foreach \y in {0,1,2,3} {
            \draw[fill=black] (\x,\y) circle (0.06);
        }
    }

    \foreach \x in {0,1,2,3,4,5,6,7,8} {
        \foreach \y in {0,1,2,3} {
            \draw[-latex] (\x,\y) -- (\x+0.94,\y);
        }
    }

    \foreach \x in {1,2,4,5} {
        \foreach \y in {1,2,3} {
            \draw[-latex] (\x,\y) -- (\x,\y-0.94);
        }
    }

    \foreach \x in {3,6,7,8} {
        \foreach \y in {0,1,2} {
            \draw[-latex] (\x,\y) -- (\x,\y+0.94);
        }
    }

    \foreach \x in {1,2,3,4,5,6,7,8} {
        \draw (\x+0.6,1) node[above] {$q$};
        \draw (\x+0.6,2) node[above] {$q^2$};
        \draw (\x+0.6,3) node[above] {$q^3$};
    }

    \draw (-0.1,0) node[left] {$4$};
    \draw (-0.1,1) node[left] {$3$};
    \draw (-0.1,2) node[left] {$2$};
    \draw (-0.1,3) node[left] {$1$};
    \draw (9.1,0) node[right] {$4$};
    \draw (9.1,1) node[right] {$3$};
    \draw (9.1,2) node[right] {$2$};
    \draw (9.1,3) node[right] {$1$};

    \draw[blue, line width=1.2] (0,0) -- (9,0);
    \draw[blue, line width=1.2] (0,2) -- (1,2) -- (1,1) -- (8,1) -- (8,3) -- (9,3);
\end {tikzpicture}
\caption {The pair of non-intersecting paths in $\Gamma_{x_n}$ from $I = \{2,4\}$ to $J=\{1,4\}$ with minimal weight. 
          The picture illustrates the special case $m=3$ and $x_n = [2,1,2,3]$.}
\label {fig:gamma_minimal_paths}
\end {figure}

\begin{remark}
Using the matrix formulation developed in the previous sections, we can give an alternate proof of \autoref{thm:stabilization}.  We focus on the case when $n$ is even, but the odd case is similar.  First, by expanding out the matrix multiplication, one can check that for any $(m+1)\times(m+1)$ matrix Y, each $2 \times 2$ minor of $R_m(q)Y$ and $L_m(q)Y$ is a positive power of $q$ times a sum of $2\times 2$ minors of $Y$.  Thus, by induction, we can see that the $2\times 2$ minors of $X_\calG(q) = R_m(q)^{a_1}L_m(q)^{a_2}\cdots R_m(q)^{a_{n-1}}L_m(q)^{a_n}$ are divisible by $q^{a_1+a_2+\cdots+a_n}$.  Moreover, if the $2\times 2$ minor is taken in the first and last columns, the induction yields that such minors are divisible by $q^{a_1+a_2+\cdots+a_n + m - 1}$.  Then interpreting the numerator of $q^m(r^q_{im}(x_n) - r^q_{im}(x_{n-1}))$ as a $2\times 2$ minor of $X_\calG(q)$ yields the desired stabilization.
\end{remark}

As in the $m=1$ case, the previous result can be strengthened to use any sequence converging to $x$.

\begin {theorem}
    Let $x = [a_1,a_2,\dots]$ be an irrational number, and let $y_1,y_2,y_3,\dots$ be any sequence of rationals
    converging to $x$. Then the series expansions of $r_{i,m}^q(y_n)$ stabilize to $r_{i,m}^q(x)$ as $n \to \infty$.
\end {theorem}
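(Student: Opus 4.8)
The plan is to bootstrap from Theorem~\ref{thm:stabilization} using the classical fact that the initial partial quotients of a real number are locally constant around an irrational point. Fix an index $k$. Since $x$ is irrational, it lies in the \emph{interior} of the interval with endpoints $[a_1,\dots,a_k]$ and $[a_1,\dots,a_{k-1},a_k+1]$ (equivalently, between the convergent $p_k/q_k$ and the mediant $(p_k+p_{k-1})/(q_k+q_{k-1})$), and this interval is exactly the set of reals whose continued fraction begins with $a_1,\dots,a_k$. Hence there is an $n_0 = n_0(k)$ so that for all $n \ge n_0$ the rational $y_n$ lies strictly inside this interval; being strictly inside (in particular $y_n \neq [a_1,\dots,a_k]$) and rational, $y_n$ then has a finite continued fraction of the form $y_n = [a_1,\dots,a_k,c_{k+1},\dots,c_\ell]$ with $\ell > k$ and all $c_j$ positive integers. (Here one fixes once and for all the convention for the finite continued fraction of a rational, to handle the ambiguity $[\dots,a,1] = [\dots,a+1]$, and uses the truncated form $[a_1,\dots,a_n]$ of convergents exactly as in Theorem~\ref{thm:stabilization}.)

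Now compare $r_{i,m}^q(y_n)$ with $r_{i,m}^q(x)$ through the common truncation $x_k := [a_1,\dots,a_k]$, which is simultaneously the $k$-th convergent of $x$ and of $y_n$. Telescoping Theorem~\ref{thm:stabilization} along the convergents
$x_k = [a_1,\dots,a_k],\ [a_1,\dots,a_k,c_{k+1}],\ \dots,\ [a_1,\dots,a_k,c_{k+1},\dots,c_\ell] = y_n$ of $y_n$, and noting that at every step the relevant exponent is a sum of at least the first $k$ partial quotients (all positive) minus one, hence is $\ge a_1 + \cdots + a_k - 1$, we conclude that the series expansions of $r_{i,m}^q(y_n)$ and of $r_{i,m}^q(x_k)$ agree up to the $q^{\,a_1 + \cdots + a_k - 1}$ term. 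On the other hand, Theorem~\ref{thm:stabilization} applied along the convergents $x_k$ of $x$ itself shows that $r_{i,m}^q(x)$ is the (already established) coefficientwise limit of the $r_{i,m}^q(x_k)$, and moreover that $r_{i,m}^q(x_k)$ agrees with $r_{i,m}^q(x)$ up to the same $q^{\,a_1 + \cdots + a_k - 1}$ term. Combining the two comparisons, for all $n \ge n_0(k)$ the series expansions of $r_{i,m}^q(y_n)$ and of $r_{i,m}^q(x)$ agree up to the $q^{\,a_1 + \cdots + a_k - 1}$ term.

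Finally, since $a_1 + \cdots + a_k \to \infty$ as $k \to \infty$, for each fixed $N$ we may choose $k$ with $a_1 + \cdots + a_k - 1 \ge N$; then for all $n \ge n_0(k)$ the coefficient of $q^N$ in $r_{i,m}^q(y_n)$ equals that of $r_{i,m}^q(x)$. As $N$ was arbitrary, the series expansions of $r_{i,m}^q(y_n)$ stabilize, coefficientwise, to $r_{i,m}^q(x)$, which is the assertion. I expect the only genuine subtlety to be the first paragraph: precisely establishing that proximity to the irrational $x$ forces $y_n$ to share the long prefix $[a_1,\dots,a_k]$ with $x$ and to possess a bona fide finite continued fraction extending it, so that the telescoping of Theorem~\ref{thm:stabilization} is legitimate. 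Everything after that is bookkeeping with the degree bounds already furnished by Theorem~\ref{thm:stabilization}.
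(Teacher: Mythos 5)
Your proof is correct and follows essentially the same route as the paper: both arguments reduce to Theorem~\ref{thm:stabilization} by observing that, for $n$ large, $y_n$ must share a long continued-fraction prefix with $x$, and then telescope the degree bounds along the intermediate convergents. The only cosmetic difference is that the paper certifies the shared prefix via the inequality $x_{\ell-1} < y_n < x_\ell$ (so the continued fraction of $y_n$ begins $[a_1,\dots,a_{\ell-1},b,\dots]$ with $b > a_\ell$), whereas you use the cylinder-interval characterization directly; both yield the same conclusion.
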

\begin {proof}
    Because the statement of Theorem \ref{thm:stabilization} does not depend on $m$, the proof of this result is essentially the 
    same as the proof given in \cite{mgo_22} (see Lemma 3.1 and Theorem 1). The idea is as follows. Let $x_n = [a_1,\dots,a_n]$
    be the rational convergents of $x$. We want to see that for any $k$, the sequence of $q^k$ coefficients in the power series expansions
    of $r_{i,m}^q(y_n)$ are eventually constant. Let $\ell$ be an even number such that $a_1+a_2+\cdots+a_\ell > k$. Because $y_n \to x$, there is some
    $N$ large enough so that for $n > N$ we have $x_{\ell-1} < y_n < x_\ell$. In terms of continued fractions this inequality is equivalent
    to saying that the continued fraction for $y_n$ begins with $[a_1,\dots,a_{\ell-1}, b,\dots]$ with $b > a_\ell$. By Theorem \ref{thm:stabilization},
    $r_{i,m}^q(y_n)$ and $r_{i,m}^q(x_{\ell-1})$ agree up to the $q^{a_1+\cdots+a_\ell-1}$ term. But we assumed that $a_1+\cdots+a_\ell > k$,
    and so all $y_n$'s with $n > N$ will have the same $q^k$ coefficient.
\end {proof}

\bigskip

\begin {example}
    In \cite{mosz_23} we showed that for the golden ratio $\varphi = \frac{1}{2}(1+\sqrt{5})$, we have
    \[ \mathrm{CF}_2(\varphi) = \left( r_{22}(\varphi), r_{12}(\varphi), r_{02}(\varphi)\right)^\top = \left( \sigma, \rho, \, 1\right)^\top \]
    where $\sigma = 4\cos(\pi/7)^2-1$ and $\rho=2\cos(\pi/7)$, which are the lengths of the diagonals in a regular $7$-gon (with side length 1).
    If $f_n$ are the Fibonacci numbers, then the ratios $x_n = f_n/f_{n-1}$ converge to $\varphi$. These rationals
    have continued fractions $x_n = [1,1,\dots,1,1]$, so we can compute $r_{im}(x_n)$ using matrix products $\Lambda_m(1)^n$.
    By Theorem \ref{thm:stabilization}, the series expansions for $r^q_{i2}(x_n)$ stabilize. The first several terms are given below:
    \begin {align*} 
        r^q_{22}(\varphi) &= 1 + q^2 + q^4 - q^5 + q^7 - 3q^8 + 6q^9 - 10q^{10} \\
                          & \quad\;\;\; +17q^{11} - 24q^{12} + 25q^{13} - 15q^{14} - 21q^{15} + 107q^{16} + \cdots \\
        r^q_{12}(\varphi) &= 1 + q^2 - q^6 + 2q^7 - 4q^8 + 7q^9 - 9q^{10} \\
                          & \quad\;\;\; + 11q^{11} - 11q^{12} + 2q^{13} + 22q^{14} - 74q^{15} + 171q^{16} + \cdots
    \end {align*}
\end {example}

\section{The Positivity Phenomenon} \label{sec:positivity}
Another important property of Morier-Genoud and Ovsienko's $q$-rational numbers is the \emph{total positivity}. If $\frac{r}{s} > \frac{a}{b}$ (equivalently $br-as > 0$), then
the numerators and denominators of the $q$-rationals satisfy the property that $\mathcal{B}(q)\mathcal{R}(q) - \mathcal{A}(q)\mathcal{S}(q)$ 
has all positive coefficients. We prove the following analogous property for the higher $q$-continued fractions.

\begin {theorem} \label{thm:positivity}
    Suppose $\frac{r}{s} > \frac{a}{b}$, with $r_{im}^q \left( \frac{r}{s} \right) = \frac{\mathcal{R}(q)}{\mathcal{S}(q)}$ and $r_{im}^q \left( \frac{a}{b}\right) = \frac{\mathcal{A}(q)}{\mathcal{B}(q)}$. 
    Then $\mathcal{B}(q)\mathcal{R}(q) - \mathcal{A}(q)\mathcal{S}(q)$ has all positive coefficients. 
\end {theorem}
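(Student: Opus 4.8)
The plan is to reduce the claim to a nonnegativity statement about $2\times 2$ minors of a product of the matrices $R_m(q)$ and $L_m(q)$, and then to establish that statement using the Lindstr\"{o}m--Gessel--Viennot lemma for the planar networks of Section~\ref{sec:stabilization} together with the triangular shape of the generators. First, by Remark~\ref{rmk:quotient_formula} and Theorem~\ref{thm:q_matrix_formula}, $(\mathcal R(q),\mathcal S(q))$ is, up to a common factor $q^{e_1}$ with $e_1\in\{0,m\}$ (according to the parity of the length of the continued fraction of $\tfrac rs$), the pair of entries in rows $m+1-i$ and $m+1$ of the first column of $X_{r/s}:=X_{\mathcal G[a_1,\dots]}(q)$, and $(\mathcal A(q),\mathcal B(q))$ likewise arises from $X_{a/b}$ up to $q^{e_2}$. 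Hence
\[ \mathcal R(q)\mathcal B(q)-\mathcal A(q)\mathcal S(q) = q^{-(e_1+e_2)}\Bigl((X_{r/s})_{m+1-i,1}\,(X_{a/b})_{m+1,1}-(X_{a/b})_{m+1-i,1}\,(X_{r/s})_{m+1,1}\Bigr), \]
and since the left-hand side is a genuine polynomial, it suffices to show the bracket lies in $\mathbb Z_{\ge 0}[q]$. (If $i=0$ the bracket vanishes identically, so assume $1\le i\le m$; then the row indices $m+1-i<m+1$ are increasing.)

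Next I would use that $\tfrac rs>\tfrac ab\ge 1$ forces the two matrix products $X_{r/s}=R_m(q)^{a_1}L_m(q)^{a_2}\cdots$ and $X_{a/b}=R_m(q)^{a_1'}L_m(q)^{a_2'}\cdots$ (each with a trailing $W_m$ in the odd-length case) to share a longest common matrix prefix $C(q)$; write $X_{r/s}=C(q)U(q)$ and $X_{a/b}=C(q)V(q)$, where $U(q),V(q)$ are again products of $R_m(q)$ and $L_m(q)$ (possibly with a trailing $W_m$, possibly trivial). Expanding each entry as $(C\,Y)_{k,1}=\sum_\ell C_{k\ell}\,Y_{\ell,1}$ and collecting antisymmetric pairs, the bracket becomes
\[ \sum_{1\le\ell<\ell'\le m+1}\det C(q)\bigl[\{m+1-i,\,m+1\},\{\ell,\ell'\}\bigr]\cdot\bigl(U(q)_{\ell,1}V(q)_{\ell',1}-U(q)_{\ell',1}V(q)_{\ell,1}\bigr), \]
so it remains to prove that every $2\times 2$ minor of $C(q)$ and every tail difference $D_{\ell\ell'}:=U(q)_{\ell,1}V(q)_{\ell',1}-U(q)_{\ell',1}V(q)_{\ell,1}$ has nonnegative coefficients.

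The minors are handled by total nonnegativity: $C(q)$ is the path-weight matrix of the planar acyclic network obtained by concatenating the $R_m(q)$- and $L_m(q)$-networks of Section~\ref{sec:stabilization}, with sources and sinks on the left and right boundary in the standard order, so by Lindstr\"{o}m--Gessel--Viennot \cite{gv_89} each minor $\det C(q)[\{a,b\},\{\ell,\ell'\}]$ (with $a<b$, $\ell<\ell'$) is the generating function for pairs of non-intersecting paths and hence lies in $\mathbb Z_{\ge 0}[q]$. For the tail differences, the fact that $C(q)$ is the \emph{maximal} common prefix together with $\tfrac rs>\tfrac ab$ pins down the possible shapes of $(U(q),V(q))$: either (i) $U(q)$ begins with the matrix $R_m(q)$ and $V(q)$ begins with $L_m(q)$, or (ii) $U(q)=\mathrm{Id}$, or (iii) $V(q)=W_m$ --- the other configurations (for instance $V(q)=\mathrm{Id}$, or $U(q)$ leading with $L_m(q)$ while $V(q)$ leads with $R_m(q)$) are incompatible with $\tfrac rs>\tfrac ab$. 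In cases (ii) and (iii) the explicit first column of $\mathrm{Id}$, resp.\ $W_m$, together with the nonnegativity of the entries of the other factor, gives $D_{\ell\ell'}\ge 0$ at once. In case (i), write $U(q)=R_m(q)U'(q)$ and $V(q)=L_m(q)V'(q)$ with $U'(q),V'(q)$ of nonnegative coefficients; then $U(q)_{\ell,1}=\sum_{k\ge\ell}q^{m+1-k}U'(q)_{k,1}$ and $V(q)_{\ell,1}=\sum_{k\le\ell}q^{m+1-k}V'(q)_{k,1}$, so for $\ell<\ell'$ one has $U(q)_{\ell,1}=U(q)_{\ell',1}+A$ and $V(q)_{\ell',1}=V(q)_{\ell,1}+B$ with $A,B\in\mathbb Z_{\ge 0}[q]$, whence $D_{\ell\ell'}=B\,U(q)_{\ell',1}+A\,V(q)_{\ell,1}+AB\in\mathbb Z_{\ge 0}[q]$. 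This shows the bracket, and hence $\mathcal R\mathcal B-\mathcal A\mathcal S$, has nonnegative coefficients; strict positivity across the whole support then follows by tracking the extremal monomials, exactly as in the $m=1$ case \cite{mgo_20}.

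The conceptual content is light, so I expect the real work to be the bookkeeping: fixing a convention for continued fractions so that ``longest common matrix prefix'' is unambiguous, carrying along the trailing factor $W_m$ that appears for odd-length continued fractions, and checking that $\tfrac rs>\tfrac ab$ genuinely rules out the degenerate configuration $V(q)=\mathrm{Id}$ (which would otherwise contribute a negative term). The key point there is that appending entries to an even-length continued fraction strictly decreases its value while appending to an odd-length one strictly increases it; combined with the monotonicity of the continued-fraction value under left multiplication by $C(q)$, this fixes the leading letters of $U(q)$ and $V(q)$ in each case.
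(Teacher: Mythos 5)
Your argument is correct, but it takes a genuinely different route from the paper. The paper's proof is a direct combinatorial one: it realizes $\mathcal{A}(q)\mathcal{S}(q)$ and $\mathcal{R}(q)\mathcal{B}(q)$ as generating functions for pairs of $P$-partitions and builds a weight-preserving injection $S\times A\hookrightarrow R\times B$ by swapping the initial segments of the two fillings up to the first ``swappable'' position; this has the added benefit of describing the difference combinatorially (as the complement of the image), which the paper uses in a subsequent remark and example. You instead work entirely on the matrix side: reduce to a $2\times 2$ determinant in the first columns of $X_{r/s}$ and $X_{a/b}$, factor out the longest common prefix $C(q)$ via Cauchy--Binet, get nonnegativity of the minors of $C(q)$ from Lindstr\"om--Gessel--Viennot applied to the planar networks of Section~\ref{sec:stabilization}, and handle the tails by the triangular structure of $R_m(q)$ and $L_m(q)$ together with the Stern--Brocot word order. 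I checked the ``bookkeeping'' you deferred and it does close up: the excluded configurations ($U$ leading with $L_m(q)$ while $V$ leads with $R_m(q)$, or $U=W_m$, or $V=\mathrm{Id}$ with $U$ nontrivial) each force $\frac{r}{s}\le\frac{a}{b}$ by the standard monotonicity of continued fractions in odd versus even positions, and in the surviving cases your monotonicity computation $U_{\ell,1}=U_{\ell',1}+A$, $V_{\ell',1}=V_{\ell,1}+B$ is valid. Your approach is in effect the higher-$m$ version of the total-positivity argument of \cite{mgo_20} and actually yields nonnegativity of all $2\times2$ minors taken in increasing rows and columns, not just the first-column statement; what it does not give is the explicit combinatorial model for the difference. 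One small caveat: your closing appeal to ``tracking extremal monomials'' for strict positivity is vague, but nothing more than coefficientwise nonnegativity is actually needed --- the paper's own injection argument proves exactly the same thing (note that in Example~\ref{ex: difference} the low-order coefficients of the difference vanish), so ``positive coefficients'' here can only mean that all nonzero coefficients are positive.
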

\begin {proof}
    Let us use the following shorthand notations for certain $\Omega_m^{ij}$ sets: 
    \[ 
        R = \Omega_m^{m+1-i,1}\left(\mathcal{G} \left(\frac{r}{s}\right)\right), \quad \quad S = \Omega_m^{m+1,1}\left(\mathcal{G}\left(\frac{r}{s}\right)\right), 
    \]
    \[
        A = \Omega_m^{m+1-i,1}\left(\mathcal{G}\left(\frac{a}{b}\right)\right),  \quad \quad B = \Omega_m^{m+1,1}\left(\mathcal{G}\left(\frac{a}{b}\right)\right). 
    \]
    Then as mentioned in Remark \ref{rmk:quotient_formula}, $\mathcal{R}(q)$, $\mathcal{S}(q)$, $\mathcal{A}(q)$, $\mathcal{B}(q)$ are the generating functions
    for the sets $R$, $S$, $A$, $B$, respectively.
    Therefore $\mathcal{R}(q)\mathcal{B}(q)$
    is the generating function for the set $R \times B$. That is,
    \[ \mathcal{R}(q)\mathcal{B}(q) = \sum_{(\rho,\beta)} q^{\mathrm{wt}(\rho) + \mathrm{wt}(\beta)}, \]
    where $\rho$ is a $P$-partition on $\mathcal{G}\left(\frac{r}{s}\right)$ whose entries in the first column are at most $i$, and $\beta$ is a $P$-partition
    on $\mathcal{G}\left(\frac{a}{b}\right)$ whose values in the first column are all $0$.
    Similarly, $\mathcal{A}(q)\mathcal{S}(q)$ is the generating function for the set $S \times A$;
    i.e. pairs $(\sigma,\alpha)$, where $\alpha$ is a $P$-partition on $\mathcal{G}\left(\frac{a}{b}\right)$ whose values in the first column are at most $i$,
    and $\sigma$ is a $P$-partition on $\mathcal{G}\left(\frac{r}{s}\right)$ whose values in the first column are all $0$.

    The strategy of the proof will be to give a weight-preserving injection
    \begin {align*}
        \varphi \colon S \times A &\longrightarrow R \times B \\
        (\sigma,\alpha) &\mapsto (\rho,\beta)
    \end {align*}
    From this, it will follow that $\mathcal{R}(q)\mathcal{B}(q) - \mathcal{S}(q)\mathcal{A}(q)$ has positive coefficients.

    Suppose the continued fractions for $\frac{r}{s}$ and $\frac{a}{b}$ agree for the first $k$ terms; that is, $\frac{r}{s} = [c_1,c_2,\dots,c_k,a_{k+1},\dots]$
    and $\frac{a}{b} = [c_1,c_2,\dots,c_k,b_{k+1},\dots]$, with $a_{k+1} \neq b_{k+1}$. We assume that $\frac{r}{s} > \frac{a}{b}$.
    If $k$ is even, this means $a_{k+1} > b_{k+1}$ (and if $k$ is odd, then $a_{k+1} < b_{k+1}$). We will just consider the case when $k$ is even (the case for $k$ odd is similar).

    The first $k$ straight segments (consisting of the first $c_1+c_2+\cdots+c_k$ squares)
    are the same in $\mathcal{G}\left(\frac{r}{s}\right)$ and $\mathcal{G}\left(\frac{a}{b}\right)$. 
    The first place where they differ is in the $(k+1)^\mathrm{st}$ segment, where $\mathcal{G}\left(\frac{r}{s}\right)$ is longer than $\mathcal{G}\left(\frac{a}{b}\right)$.
    This observation will be used to define the injective map $\varphi$ mentioned above. 

    Let $(\sigma, \alpha) \in S \times A$, corresponding to one of the terms in $\mathcal{S}(q)\mathcal{A}(q)$.
    Recall from the definitions of $\Omega_m^{i,j}$ that $\alpha$ has values at most $i$ in the first column of $\mathcal{G}\left(\frac{a}{b}\right)$
    and $\sigma$ has value $0$ in all boxes in the first column of $\mathcal{G}\left(\frac{r}{s}\right)$. 

    Let $d = c_1+c_2+\cdots+c_k+b_{k+1}$, that is, $d$ is the position of the last square in the initial segment that $\mathcal{G}\left(\frac{r}{s}\right)$ and $\mathcal{G}\left(\frac{a}{b}\right)$ have in common.  We say that a position $j \in \{1,2,\dots,d\}$ is \emph{swappable} in the pair $(\sigma,\alpha)$ if swapping the labels in all squares up to (and including) the $j^\mathrm{th}$ one in $\sigma$ and $\alpha$ yields a pair of $P$-partitions.  Any pair $(\sigma,\alpha) \in S \times A$ has a swappable position.  In particular, let $k \in \{1,\dots,d-1\}$ be minimal such that $\sigma(k + 1) \geq \alpha(k + 1)$, or, if no such position exists, let $k = d$.  It is then straightforward to check that $k$ is a swappable position.  
    
     Then we define $\varphi(\sigma, \alpha) := (\rho, \beta)$, obtained by swapping the labels on all squares up to (and including) the first swappable position.  That is, letting $\ell$ denote the first swappable position, we set $\rho(j) = \alpha(j)$ and $\beta(j) = \sigma(j)$ for all $j \leq \ell$; and if $j > \ell$, then $\rho(j) = \sigma(j)$ and $\beta(j) = \alpha(j)$. See Figure \ref{fig:swap} for an illustration.

     This map is weight-preserving, since the sum of the parts of $\alpha$ and $\sigma$ are the same as the sum of the parts of $\rho$ and $\beta$. Moreover, since the initial length $\ell$ portions of the $P$-partitions are not changed, then the first swappable position in the resulting pair $(\rho,\beta)$ is the same as that of $(\sigma,\alpha)$.  Hence we can similarly construct the preimage $(\sigma,\alpha)$ from $(\rho,\beta)$ by swapping the labels up to the first swappable position in $(\rho,\beta)$.  Therefore, we can conclude that the map $\varphi$ is injective.  
\end {proof}

\begin{figure}[h]
\centering
\begin{tikzpicture}[scale = 0.7]
	\begin{pgfonlayer}{nodelayer}
		\node [style=none] (0) at (-11.5, -2) {};
		\node [style=none] (1) at (-10.5, -2) {};
		\node [style=none] (3) at (-11.5, 2) {};
		\node [style=none] (6) at (-8.5, 3) {};
		\node [style=none] (7) at (-9.5, 3) {};
		\node [style=none] (8) at (-6.5, -2) {};
		\node [style=none] (9) at (-5.5, -2) {};
		\node [style=none] (10) at (-5.5, 1) {};
		\node [style=none] (11) at (-6.5, 2) {};
		\node [style=none] (12) at (-4.5, 2) {};
		\node [style=none] (13) at (-3.5, 1) {};
		\node [style=none] (14) at (-3.5, 3) {};
		\node [style=none] (15) at (-4.5, 3) {};
		\node [style=none] (16) at (1.5, -2) {};
		\node [style=none] (17) at (2.5, -2) {};
		\node [style=none] (18) at (2.5, 1) {};
		\node [style=none] (19) at (1.5, 2) {};
		\node [style=none] (20) at (3.5, 2) {};
		\node [style=none] (21) at (4.5, 1) {};
		\node [style=none] (22) at (4.5, 3) {};
		\node [style=none] (23) at (3.5, 3) {};
		\node [style=none] (24) at (6.5, -2) {};
		\node [style=none] (25) at (7.5, -2) {};
		\node [style=none] (26) at (7.5, 1) {};
		\node [style=none] (27) at (6.5, 2) {};
		\node [style=none] (28) at (8.5, 2) {};
		\node [style=none] (29) at (9.5, 1) {};
		\node [style=none] (30) at (9.5, 3) {};
		\node [style=none] (31) at (8.5, 3) {};
		\node [style=none] (33) at (-11.5, 1) {};
		\node [style=none] (34) at (-11.5, 0) {};
		\node [style=none] (35) at (-10.5, 0) {};
		\node [style=none] (36) at (-10.5, -1) {};
		\node [style=none] (37) at (-11.5, -1) {};
		\node [style=none] (38) at (-9.5, 1) {};
		\node [style=none] (39) at (-8.5, 2) {};
		\node [style=none] (44) at (-7.5, 5) {};
		\node [style=none] (50) at (-9.5, 4) {};
		\node [style=none] (51) at (-2.5, 3) {};
		\node [style=none] (52) at (-2.5, 2) {};
		\node [style=none] (53) at (-1.5, 3) {};
		\node [style=none] (54) at (-1.5, 2) {};
		\node [style=none] (55) at (-1.5, 4) {};
		\node [style=none] (56) at (-2.5, 4) {};
		\node [style=none] (57) at (-3.5, 2) {};
		\node [style=none] (58) at (-6.5, 1) {};
		\node [style=none] (59) at (-6.5, 0) {};
		\node [style=none] (60) at (-5.5, 0) {};
		\node [style=none] (61) at (-5.5, -1) {};
		\node [style=none] (62) at (-6.5, -1) {};
		\node [style=none] (63) at (-5.5, 2) {};
		\node [style=none] (64) at (-4.5, 1) {};
		\node [style=none] (65) at (3.5, 5) {};
		\node [style=none] (66) at (4.5, 5) {};
		\node [style=none] (67) at (4.5, 6) {};
		\node [style=none] (68) at (5.5, 6) {};
		\node [style=none] (69) at (5.5, 4) {};
		\node [style=none] (70) at (4.5, 4) {};
		\node [style=none] (71) at (11.5, 2) {};
		\node [style=none] (72) at (10.5, 3) {};
		\node [style=none] (73) at (11.5, 4) {};
		\node [style=none] (74) at (10.5, 4) {};
		\node [style=none] (75) at (9.5, 2) {};
		\node [style=none] (76) at (7.5, 2) {};
		\node [style=none] (77) at (5.5, 5) {};
		\node [style=none] (78) at (3.5, 4) {};
		\node [style=none] (79) at (4.5, 2) {};
		\node [style=none] (80) at (3.5, 1) {};
		\node [style=none] (81) at (2.5, 2) {};
		\node [style=none] (82) at (1.5, 1) {};
		\node [style=none] (83) at (1.5, 0) {};
		\node [style=none] (84) at (2.5, 0) {};
		\node [style=none] (85) at (2.5, -1) {};
		\node [style=none] (86) at (1.5, -1) {};
		\node [style=none] (87) at (6.5, 1) {};
		\node [style=none] (88) at (6.5, 0) {};
		\node [style=none] (89) at (7.5, 0) {};
		\node [style=none] (90) at (7.5, -1) {};
		\node [style=none] (91) at (6.5, -1) {};
		\node [style=none] (92) at (10.5, 2) {};
		\node [style=none] (93) at (11.5, 3) {};
		\node [style=none] (94) at (8.5, 1) {};
	\node [style=none] (95) at (-11, 1.5) {$0$};
		\node [style=none] (96) at (-11, 0.5) {$0$};
		\node [style=none] (97) at (-11, -0.5) {$0$};
		\node [style=none] (98) at (-11, -1.5) {$0$};
		\node [style=none] (99) at (7, 1.5) {$0$};
		\node [style=none] (100) at (7, 0.5) {$0$};
		\node [style=none] (101) at (7, -0.5) {$0$};
		\node [style=none] (102) at (7, -1.5) {$0$};
		\node [style=none] (103) at (-10, 1.5) {$1$};
		\node [style=none] (104) at (-9, 1.5) {$2$};
		\node [style=none] (105) at (-9, 2.5) {$2$};
		\node [style=none] (106) at (2, 1.5) {$2$};
		\node [style=none] (107) at (2, 0.5) {$2$};
		\node [style=none] (108) at (2, -0.5) {$3$};
		\node [style=none] (109) at (2, -1.5) {$3$};
		\node [style=none] (110) at (-6, 1.5) {$2$};
		\node [style=none] (111) at (-6, 0.5) {$2$};
		\node [style=none] (112) at (-6, -0.5) {$3$};
		\node [style=none] (113) at (-6, -1.5) {$3$};
		\node [style=none] (114) at (-4, 2.5) {$0$};
		\node [style=none] (115) at (-4, 1.5) {$3$};
		\node [style=none] (116) at (-5, 1.5) {$2$};
		\node [style=none] (117) at (3, 1.5) {$2$};
		\node [style=none] (118) at (4, 1.5) {$2$};
		\node [style=none] (119) at (4, 2.5) {$2$};
		\node [style=none] (120) at (8, 1.5) {$1$};
		\node [style=none] (121) at (9, 1.5) {$3$};
		\node [style=none] (122) at (9, 2.5) {$0$};
		\node [style=none] (123) at (-3, 2.5) {$1$};
		\node [style=none] (124) at (-2, 2.5) {$2$};
		\node [style=none] (125) at (-2, 3.5) {$1$};
		\node [style=none] (126) at (10, 2.5) {$1$};
		\node [style=none] (127) at (11, 2.5) {$2$};
		\node [style=none] (128) at (11, 3.5) {$1$};
		\node [style=none] (129) at (-8, 5.5) {$1$};
		\node [style=none] (130) at (-8, 4.5) {$2$};
		\node [style=none] (131) at (-9, 3.5) {$1$};
		\node [style=none] (132) at (-9, 4.5) {$1$};
		\node [style=none] (133) at (5, 5.5) {$1$};
		\node [style=none] (134) at (5, 4.5) {$2$};
		\node [style=none] (135) at (4, 3.5) {$1$};
		\node [style=none] (136) at (4, 4.5) {$1$};
		\node [style=none] (137) at (-9, -3) {$\sigma$};
		\node [style=none] (138) at (-4, -3) {$\alpha$};
		\node [style=none] (139) at (4, -3) {$\rho$};
		\node [style=none] (140) at (9, -3) {$\beta$};
		\node [style=none] (141) at (-1, 1.5) {};
		\node [style=none] (142) at (1, 1.5) {};
		\node [style=none] (143) at (-8.5, 6) {};
		\node [style=none] (144) at (-7.5, 6) {};
		\node [style=none] (145) at (-7.5, 4) {};
		\node [style=none] (146) at (-8.5, 4) {};
		\node [style=none] (147) at (-8.5, 1) {};
		\node [style=none] (148) at (-10.5, 1) {};
		\node [style=none] (149) at (-10.5, 2) {};
		\node [style=none] (150) at (-9.5, 2) {};
		\node [style=none] (151) at (-9.5, 5) {};
		\node [style=none] (152) at (-8.5, 5) {};
		\node [style=none] (154) at (-11.5, 2) {};
		\node [style=none] (155) at (-10.5, 2) {};
		\node [style=none] (156) at (-10.5, -2) {};
		\node [style=none] (157) at (-11.5, -2) {};
		\node [style=none] (158) at (-9.5, 3) {};
		\node [style=none] (159) at (-8.5, 3) {};
		\node [style=none] (160) at (-8.5, 1) {};
		\node [style=none] (161) at (-10.5, 1) {};
		\node [style=none] (162) at (-10.5, -2) {};
		\node [style=none] (163) at (-11.5, -2) {};
		\node [style=none] (164) at (-11.5, 2) {};
		\node [style=none] (166) at (-9.5, 2) {};
		\node [style=none] (167) at (0, 2) {$\varphi$};
	\end{pgfonlayer}
	\begin{pgfonlayer}{edgelayer}
		\draw [style=shade] (12.center)
			 to (11.center)
			 to (8.center)
			 to (9.center)
			 to (10.center)
			 to (13.center)
			 to (14.center)
			 to (15.center);
		\draw [style=shade] (20.center)
			 to (19.center)
			 to (16.center)
			 to (17.center)
			 to (18.center)
			 to (21.center)
			 to (22.center)
			 to (23.center);
		\draw [style=shade] (28.center)
			 to (27.center)
			 to (24.center)
			 to (25.center)
			 to (26.center)
			 to (29.center)
			 to (30.center)
			 to (31.center);
    	\draw [style=shade] (158.center)
			 to (159.center)
			 to (160.center)
			 to (161.center)
			 to (162.center)
			 to (163.center)
			 to (164.center)
			 to (166.center)
			 to cycle;
		\draw [style=fade] (37.center) to (36.center);
		\draw [style=fade] (34.center) to (35.center);
		\draw [style=thick] (7.center) to (6.center);
		\draw [style=thick] (51.center)
			 to (15.center)
			 to (12.center)
			 to (11.center)
			 to (8.center)
			 to (9.center)
			 to (10.center)
			 to (13.center)
			 to (57.center)
			 to (54.center)
			 to (55.center)
			 to (56.center)
			 to cycle;
		\draw [style=thick] (62.center) to (61.center);
		\draw [style=thick] (60.center) to (59.center);
		\draw [style=thick] (58.center) to (10.center);
		\draw [style=thick] (10.center) to (63.center);
		\draw [style=thick] (12.center) to (64.center);
		\draw [style=thick] (12.center) to (57.center);
		\draw [style=thick] (57.center) to (14.center);
		\draw [style=thick] (51.center) to (52.center);
		\draw [style=thick] (51.center) to (53.center);
		\draw [style=thick] (66.center)
			 to (65.center)
			 to (20.center)
			 to (19.center)
			 to (16.center)
			 to (17.center)
			 to (18.center)
			 to (21.center)
			 to (70.center)
			 to (69.center)
			 to (68.center)
			 to (67.center)
			 to cycle;
		\draw [style=thick] (31.center)
			 to (28.center)
			 to (76.center)
			 to (26.center)
			 to (29.center)
			 to (75.center)
			 to (71.center)
			 to (73.center)
			 to (74.center)
			 to (72.center)
			 to cycle;
		\draw [style=thick] (66.center) to (77.center);
		\draw [style=thick] (70.center) to (66.center);
		\draw [style=thick] (70.center) to (78.center);
		\draw [style=thick] (23.center) to (22.center);
		\draw [style=thick] (79.center) to (20.center);
		\draw [style=thick] (20.center) to (80.center);
		\draw [style=thick] (81.center) to (18.center);
		\draw [style=thick] (18.center) to (82.center);
		\draw [style=thick] (83.center) to (84.center);
		\draw [style=thick] (86.center) to (85.center);
		\draw [style=thick] (72.center) to (93.center);
		\draw [style=thick] (72.center) to (92.center);
		\draw [style=thick] (30.center) to (75.center);
		\draw [style=thick] (75.center) to (28.center);
		\draw [style=thick] (28.center) to (94.center);
		\draw [style=fade] (24.center)
			 to (25.center)
			 to (76.center)
			 to (27.center)
			 to cycle;
		\draw [style=fade] (90.center) to (91.center);
		\draw [style=fade] (88.center) to (89.center);
		\draw [style=fade] (87.center) to (26.center);
		\draw [style=red arrow thick] (141.center) to (142.center);
		\draw [style=thick] (150.center)
			 to (149.center)
			 to (148.center)
			 to (147.center)
			 to (146.center)
			 to (145.center)
			 to (144.center)
			 to (143.center)
			 to (152.center)
			 to (151.center)
			 to cycle;
		\draw [style=fade] (157.center)
			 to (156.center)
			 to (155.center)
			 to (154.center)
			 to cycle;
		\draw [style=fade] (33.center) to (148.center);
		\draw [style=thick] (150.center) to (38.center);
		\draw [style=thick] (150.center) to (39.center);
		\draw [style=thick] (50.center) to (146.center);
		\draw [style=thick] (146.center) to (152.center);
		\draw [style=thick] (152.center) to (44.center);
	\end{pgfonlayer}
\end{tikzpicture}
\caption {An example of the map $\varphi$ from the proof of Theorem \ref{thm:positivity}. 
    The shaded region indicates the common part of $\mathcal{G}\left(\frac{r}{s}\right)$ and $\mathcal{G}\left(\frac{a}{b}\right)$. 
    The portion with thinner outlines are the entries where the $P$-partitions must be $0$.
    In this example, the map $\varphi$ swaps the labels in the first $5$ boxes.}
\label {fig:swap}
\end{figure}

\bigskip

\begin {example}\label{ex: difference}
    We have $\frac{5}{2} > \frac{7}{3}$, with 
    \begin {align*} 
        r^q_{22} \left( \frac{5}{2} \right) &= \frac{1+2q+3q^2+3q^3+3q^4+q^5+q^6}{1+q+q^2} \\
        r^q_{22} \left( \frac{7}{3} \right) &= \frac{1 + 2q + 4q^2 + 4q^3 + 5q^4 + 4q^5 + 3q^6 + q^7 + q^8}{1+q+2q^2+q^3+q^4} 
    \end {align*}
    The corresponding difference is
    \[ \mathcal{R}(q)\mathcal{B}(q) - \mathcal{A}(q)\mathcal{S}(q) = q^3 + 2q^4 + 2q^5 + 2q^6 + q^7 + q^8 \]

\end{example}

\bigskip

\begin{remark}
    Not only are the coefficients of $\mathcal{R}(q)\mathcal{B}(q) - \mathcal{A}(q)\mathcal{S}(q)$ positive integers, but the proof of Theorem \ref{thm:positivity}
    gives some combinatorial interpretation as well. This difference is the generating function for pairs of $P$-partitions which are in the \emph{complement}
    of the image of the map $\varphi$. It follows from the proof of Theorem~\ref{thm:positivity} that this complement consists of the pairs in $R \times B$ such that no position in $\{1,2,\dots,d\}$ is swappable.  These are precisely the pairs $(\rho,\beta) \in R \times B$ such that 
    \begin{itemize}
        \item for all $1 \leq j < d$, we have that both ${\max(\rho(j),\beta(j)) > \min(\rho(j+1),\beta(j+1))}$ and ${\min(\rho(j),\beta(j)) < \max(\rho(j+1),\beta(j+1))}$ hold, and
        \item at least one of $\rho(d) > \beta(d+1)$ or $\beta(d) < \rho(d+1)$ holds.
    \end{itemize}.
\end {remark}

\begin{example}
    If we take $\frac{r}{s} = \frac{5}{2}$ and $\frac{a}{b} = \frac{7}{3}$ as in Example \ref{ex: difference}, then there are $9$ pairs of $P$-partitions in the \emph{complement} of the image of the map $\varphi$.  These pairs are shown in Figure~\ref{fig: difference p partitions}.
    \begin{figure}[ht]
\centering
    \begin{tikzpicture}[scale = 0.8]
    \newcommand{\mytikzpicture}[7]{
        \begin{tikzpicture}[scale = 0.6]
            \begin{pgfonlayer}{nodelayer}
                \node [style=none] (0) at (0,2) {};
                \node [style=none] (1) at (1,2) {};
                \node [style=none] (2) at (2,2) {};
                \node [style=none] (3) at (0,1) {};
                \node [style=none] (4) at (1,1) {};
                \node [style=none] (5) at (2,1) {};
                \node [style=none] (6) at (0,0) {};
                \node [style=none] (7) at (1,0) {};
                \node [style=none] at (0.5,0.5) {#1};
                \node [style=none] at (1.5,1.5) {#3};
                \node [style=none] at (0.5,1.5) {#2};
            \end{pgfonlayer}
            \begin{pgfonlayer}{edgelayer}
                \draw [style=thick] (2.east) to (0.west);
                \draw [style=thick] (3.west) to (5.east);
                \draw [style=thick] (6.west) to (7.east);
                \draw [style=thick] (6.center) to (0.center);
                \draw [style=thick] (1.center) to (7.center);
                \draw [style=thick] (2.center) to (5.center);
            \end{pgfonlayer}
        \end{tikzpicture}
        \,
        \begin{tikzpicture}[scale = 0.6]
        	\begin{pgfonlayer}{nodelayer}
        		\node [style=none] (0) at (0,2) {};
        		\node [style=none] (1) at (1,2) {};
                \node [style=none] (2) at (2,2) {};
                \node [style=none] (3) at (0,1) {};
        		\node [style=none] (4) at (1,1) {};
                \node [style=none] (5) at (2,1) {};
                \node [style=none] (6) at (0,0) {};
        		\node [style=none] (7) at (1,0) {};
                \node [style=none] (8) at (3,1) {};
                \node [style=none] (9) at (3,2) {};
                \node [style=none] at (0.5,0.5) {#4};
                \node [style=none] at (1.5,1.5) {#6};
                \node [style=none] at (0.5,1.5) {#5};
                \node [style=none] at (2.5,1.5) {#7};
        	\end{pgfonlayer}
        	\begin{pgfonlayer}{edgelayer}
        		\draw [style=thick] (9.east) to (0.west);
        		\draw [style=thick] (3.west) to (8.east);
        		\draw [style=thick] (6.west) to (7.east);
                \draw [style=thick] (6.center) to (0.center);
        		\draw [style=thick] (1.center) to (7.center);
        		\draw [style=thick] (2.center) to (5.center);
                \draw [style=thick] (8.center) to (9.center);
        	\end{pgfonlayer}
        \end{tikzpicture}
    }
    \node at (0,0) {\mytikzpicture{1}{1}{1}{0}{0}{0}{0}};
    \node at (0,-2) {\mytikzpicture{2}{1}{2}{0}{0}{0}{0}};
    \node at (0,-4) {\mytikzpicture{2}{1}{2}{0}{0}{0}{1}};
    \node at (6.5,0) {\mytikzpicture{1}{1}{2}{0}{0}{0}{0}};
    \node at (6.5,-2) {\mytikzpicture{1}{1}{2}{0}{0}{0}{1}};
    \node at (6.5,-4) {\mytikzpicture{2}{2}{2}{0}{0}{0}{1}};
    \node at (13,0) {\mytikzpicture{2}{1}{1}{0}{0}{0}{0}};
    \node at (13,-2) {\mytikzpicture{2}{2}{2}{0}{0}{0}{0}};
    \node at (13,-4) {\mytikzpicture{2}{2}{2}{0}{0}{1}{1}};
\end{tikzpicture}
\caption{The pairs of $P$-partitions that give a combinatorial interpretation for $\mathcal{R}(q)\mathcal{B}(q) - \mathcal{A}(q)\mathcal{S}(q)$ in Example \ref{ex: difference}.}
\label{fig: difference p partitions}
\end{figure}
\end{example}

\section{Acknowledgements}
The authors would like to thank Gregg Musiker for helpful conversations and for his early contributions to this project.  Amanda Burcroff is grateful to her advisor, Lauren Williams, for support throughout this project.

\bibliographystyle{alpha}
\bibliography{refs}

\end{document}